\documentclass[reqno,a4paper,12pt]{amsart}

\numberwithin{equation}{section}
\usepackage{amsmath, amsthm, amssymb, amscd, accents, amsfonts}
\usepackage{mathtools}
\usepackage{url}
\usepackage{mathrsfs,dsfont}
\usepackage{datetime}
\usepackage{hyperref}
\usepackage{xcolor}

\usepackage[no-math]{fontspec}
\usepackage[frozencache=true,cachedir=minted-cache]{minted}

\newmintinline[lean]{lean4}{bgcolor=white}
\newminted[leancode]{lean4}{escapeinside=!!, breaklines, fontsize=\normalsize}
\usepackage[sort,nocompress]{cite}
\mathtoolsset{showonlyrefs}

\newtheorem{definition}{Definition}[section]
\newtheorem{theorem}[definition]{Theorem}
\newtheorem*{theorem*}{Theorem}
\newtheorem{lemma}[definition]{Lemma}

\newtheorem{proposition}[definition]{Proposition}

\newtheorem{example}[definition]{Example}

\def\N{{\mathbb N}}
\def\Z{{\mathbb Z}}
\def\R{{\mathbb R}}
\def\T{{\mathbb T}}
\def\C{{\mathbb C}}
\def\Q{{\mathbb Q}}

\newcommand{\Rd}{{\R^d}}

\newcommand{\ltd}{{L^2(\R^d)}}

\allowdisplaybreaks

\begin{document}

\title[On the existence problem of regular Gabor frames]{On the existence problem of regular \\ Gabor frames}

\author[Jaume de Dios Pont]{Jaume de Dios Pont}
\address{Center for Data Science, New York University, New York, New York 10011, USA}
\email{jdedios@nyu.edu}

\author[Lukas Liehr]{Lukas Liehr}
\address{Department of Mathematics, Bar-Ilan University, Ramat-Gan 5290002, Israel}
\email{lukas.liehr@biu.ac.il}

\author[Mitchell A. Taylor]{Mitchell A. Taylor}
\address{ Department of Mathematics, ETH Z\"urich, Ramistrasse 101, 8092 Z\"urich, Switzerland}
\email{mitchell.taylor@math.ethz.ch}

\date{\today}
\subjclass[2020]{42C15, 42C40, 55R40}
\keywords{Gabor frames, quasiperiodic functions, Chern classes}

\begin{abstract}
For every dimension $d > 1$, we establish explicit criteria on lattices $\Lambda \subset \mathbb{R}^{2d}$ with density $D(\Lambda) > 1$ such that no function with a continuous Zak transform generates a Gabor frame along $\Lambda$. In particular, this gives a negative answer to the existence problem of Gabor frames with window functions in the Schwartz space, the Feichtinger algebra, and the Fourier-invariant Wiener space.
Our result is based on a characterization of when a collection of quasiperiodic functions admits a common zero, which may be of independent interest. We also include a formalization of our main result in Lean 4.
\end{abstract}

\maketitle

\section{Introduction and main results}

\subsection{}
For $g \in L^2(\mathbb{R}^d)$ and $\lambda=(x,\omega)\in\R^{2d}$, let $g_\lambda(t) := e^{2\pi i \omega\cdot t}g(t-x)$ denote the time-frequency shift of $g$ with respect to $\lambda$. The Gabor system of $g$ along a set $\Lambda \subseteq \R^{2d}$ is the collection of time-frequency shifts of $g$ with respect to the elements of $\Lambda$, i.e., 
$$
\mathbf{G}(g,\Lambda):=\{g_\lambda:\lambda\in\Lambda\}.
$$
The function $g$ is commonly referred to as the window function of $\mathbf{G}(g,\Lambda)$.
A fundamental problem in time-frequency analysis is to determine for which $g$ and $\Lambda$ the system $\mathbf{G}(g,\Lambda)$ forms a frame of $L^2(\mathbb{R}^d)$; that is, when there exists constants $A,B>0$ such that for all $f \in L^2(\Rd)$ one has the inequalities
$$
A\|f\|^2 \leq \sum_{\lambda\in\Lambda} |\langle f,g_\lambda\rangle|^2 \leq B\|f\|^2.
$$
This question has been studied extensively under the structural assumption that $\Lambda$ is a lattice, i.e., $\Lambda=M\mathbb{Z}^{2d} = \{ Mz : z \in \Z^{2d} \}$
for some invertible matrix $M\in\mathbb{R}^{2d\times 2d}$ \cite{Groechenig, belov2023gabor,grochenig2023totally,GroechenigRomeroStoeckler,1111}.

\subsection{}
In what follows, let $\Lambda \subseteq \R^{2d}$ be a lattice and let $D(\Lambda)$ be its density. It is well-known that if $\mathbf{G}(g,\Lambda)$ is a frame, then $\Lambda$ satisfies the necessary density condition $D(\Lambda)\geq 1$ \cite{heil2007history}.
A theorem of Bekka \cite{bekka2004square}, which is central to the study of Gabor frames, shows that the condition $D(\Lambda)\geq 1$ is also sufficient in an existential sense: whenever a lattice satisfies $D(\Lambda) \geq 1$, then there exists some $g \in \ltd$ for which $\mathbf{G}(g,\Lambda)$ is a frame for $L^2(\mathbb{R}^d)$. However, Bekka's argument is non-constructive and it leaves open whether one may choose $g$ with prescribed regularity or decay, such as belonging to the Schwartz space $\mathcal{S}(\Rd)$. While the existence of a Gabor frame with window in $\mathcal{S}(\Rd)$ for lattices with critical density $D(\Lambda)=1$ is excluded by the Balian-Low theorem, the existence for lattices with supercritical density $D(\Lambda)>1$ constitutes a well-known open problem in dimensions $d>1$ (the existence of a Schwartz window in the case $d=1$ was settled affirmatively by Seip-Wallstén and Lyubarskii \cite{Seip1992,Seip+1992+91+106,lyubarskiiframes}). Precisely, it is conjectured that for every lattice $\Lambda$ with $D(\Lambda)>1$ there exists $g \in \mathcal{S}(\Rd)$ such that $\mathbf{G}(g,\Lambda)$ is a frame \cite[p.~225]{grochenig2011multivariate}.
Another formulation of the conjecture asks if one can choose $g$ as an element of the Feichtinger algebra $S_0(\Rd)$, which is the collection of all functions $g \in \ltd$ such that
$$
\int_{\R^{2d}} |\langle g,g_z \rangle| \, dz < \infty.
$$
This version of the existence problem of regular Gabor frames is recorded, for instance, in \cite{jakobsen2020duality}. In fact, it is equivalent to the existence problem of Schwartz Gabor frames \cite[Proposition 4.4]{grochenig2020balian}.

\subsection{}
In the present paper we consider the class of functions $C_Z(\Rd)$ which consists of all $f \in \ltd$ such that $f$ has a continuous Zak transform, i.e.,
$$
Zf(u,v):=\sum_{n\in\mathbb Z^d} f(u-n)e^{2\pi i\,n\cdot v}
$$
is continuous on $\R^d \times \R^d$.
The space $C_Z(\Rd)$ is an enlargement of the Feichtinger algebra and contains the so-called Fourier invariant Wiener space $W_0(\Rd)$, whose definition goes back to Wiener's seminal work on Tauberian theorems \cite{wiener1932tauberian}. A function $f$ belongs to $W_0(\Rd)$ if
$$
\sum_{n \in \Z^d} \| f(\cdot + n) \|_{L^\infty([0,1]^d)} < \infty, \quad \sum_{n \in \Z^d} \| \hat f(\cdot + n) \|_{L^\infty([0,1]^d)} < \infty,
$$
where $\hat f$ denotes the Fourier transform of $f$. Specifically, the aforementioned function spaces are related by the strict inclusions
\begin{equation}\label{eq:space_inclusions}
    \mathcal{S}(\Rd) \subsetneq S_0(\Rd) \subsetneq W_0(\Rd) \subsetneq C_Z(\Rd).
\end{equation}
The fact that $S_0(\Rd)$ is a proper subspace of $W_0(\Rd)$ follows from a theorem of Losert \cite[Theorem 2]{losert1980characterization}, who proved that when defining analogous spaces $S_0(G)$ and $W_0(G)$ on a locally compact abelian group $G$, the equality $S_0(G) = W_0(G)$ occurs if and only if $G$ is either compact or discrete. The remaining strict inclusions are discussed in Section \ref{sec:wiener}.

\subsection{}
In contrast to what was expected from the literature and the series of positive results discussed below, we prove in every dimension $d > 1$ that there is a general obstruction to the frame property of Gabor systems $\mathbf{G}(g,\Lambda)$. Our obstruction holds for window functions $g$ belonging to the largest space in \eqref{eq:space_inclusions}, i.e, $g \in C_Z(\mathbb{R}^d)$. As a consequence, this yields a negative resolution to the existence problem for Gabor frames in $\mathcal{S}(\Rd)$ and $S_0(\Rd)$.

\begin{theorem}\label{thm:conj}
    For every $d > 1$ there exists a lattice $\Lambda \subset \R^{2d}$ with density $D(\Lambda) > 1$ having the following property: there exists no $g \in C_Z(\mathbb{R}^d)$ such that $\mathbf G(g,\Lambda)$ is a frame for $L^2(\mathbb{R}^d)$.
\end{theorem}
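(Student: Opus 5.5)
Our approach is to choose $\Lambda$ of product form adapted to the Zak transform. The natural candidate is
$$
\Lambda = \tfrac{1}{N}\Z^d \times \Z^d ,\qquad\text{or more generally}\qquad \Lambda = A\Z^d\times \Z^d ,
$$
where $A\in\R^{d\times d}$ is an integer-friendly matrix with $A^{-1}\Z^d \supseteq \Z^d$ and $\Z^d \subseteq A\Z^d$ of index $N=|\det A|^{-1}$. Then $D(\Lambda)=N>1$.

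For such rational lattices the Zibulski--Zeevi / Zak transform machinery applies. $\mathbf G(g,\Lambda)$ is a frame if and only if the $N$ functions
$$
\Phi_k(u,v) = Zg(u+a_k,v),\qquad a_k\in A\Z^d/\Z^d,
$$
have no common zero, uniformly in $(u,v)$. Precisely, the frame bounds are the extreme eigenvalues of the $1\times N$ Zibulski--Zeevi matrix, i.e. $\sum_k |Zg(u+a_k,v)|^2$ must be bounded below. Since $g\in C_Z(\Rd)$ makes $Zg$ continuous, the frame property is equivalent to the pointwise statement $\sum_k|Zg(u+a_k,v)|^2>0$ everywhere. So the first step is to establish this reduction carefully, including the $L^2$ unitarity of $Z$ onto quasiperiodic functions on $[0,1)^{2d}$.

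The core step is then the topological one: show that $N$ continuous quasiperiodic functions, with
$$
F(u+n,v)=e^{2\pi i n\cdot v}F(u,v),\qquad F(u,v+m)=F(u,v),
$$
must have a common zero. Equivalently, the vector $(\Phi_1,\dots,\Phi_N)$ is a continuous section of the vector bundle $E=L^{\oplus N}$ over $\T^{2d}$. Here $L$ is the line bundle determined by the multiplier $e^{2\pi i n\cdot v}$; the translates $u\mapsto u+a_k$ give quasiperiodic functions for the same multiplier up to constant phases, so they are still sections of isomorphic line bundles. The first Chern class of $L$ is
$$
c_1(L)=\sum_{j=1}^d du_j\wedge dv_j ,
$$
the symplectic form. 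The total Chern class is $c(E)=(1+c_1)^N$, whose top-degree component is
$$
c_{N}(E)=\binom{N}{N}c_1^N .
$$
A nowhere vanishing section of a rank $N$ complex bundle forces $c_N(E)=0$ in $H^{2N}(\T^{2d};\Z)$. But $c_1^N=N!\,\cdot(\text{nonzero class})$ for $N\le d$, since $\omega^N\neq0$ in the integral cohomology of the torus, and that cohomology is torsion free. Choosing $d\ge2$ and $N=2\le d$, e.g. $\Lambda=(\tfrac12\Z\times\Z^{d-1})\times\Z^d$ with density $2$, we get $c_2(E)=c_1^2\ne0$. Hence a common zero exists, so $\sum_k|Zg(u+a_k,v)|^2$ vanishes somewhere and no frame exists.

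The main obstacle is making the bundle identification rigorous. We must check that each shifted function $Zg(\cdot+a_k,\cdot)$ is a section of a line bundle with the same $c_1$; the shift only modifies the cocycle by a coboundary. We must also justify that the Euler class/top Chern class obstructs nonvanishing sections for merely continuous sections, which is standard obstruction theory. A secondary technical point is the Zibulski--Zeevi characterization for $g\in C_Z$ rather than nicer windows. There we only need the necessity direction: a common zero together with continuity gives, by localizing $f$ near the zero via the inverse Zak transform, functions with arbitrarily small frame-coefficient energy.
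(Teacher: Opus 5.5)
Your proposal is correct and follows essentially the paper's route. You split $\Lambda$ into cosets of $\Z^{2d}$ and use the Zak transform to reduce the frame property to $\sum_k |Zg(u+a_k,v)|^2>0$. You then obtain a common zero from $c_N(\mathcal L^{\oplus N})=c_1(\mathcal L)^N\neq 0$ for $N=2\le d$. Your lattice $(\tfrac12\Z\times\Z^{d-1})\times\Z^d$ differs from the paper's $\Z^d\times(\Z^{d-1}\times\tfrac12\Z)$ only in that the coset representatives are pure translations, so no phase factor appears. Note that your preliminary example $\tfrac1N\Z^d\times\Z^d$ has density $N^d$ rather than $N$, but this does not affect the final choice.
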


In combination with the theorems of Seip–Wallstén and Lyubarskii, we obtain that the property that every lattice in $\mathbb{R}^{2d}$ with supercritical density admits a Gabor frame with window in $C_Z(\mathbb{R}^d)$ is equivalent to the condition $d = 1$. This indicates a significant difference in the behavior of the existence problem for Gabor frames in higher dimensions compared to the univariate case.

\subsection{}\label{g}
Our theorem complements a series of positive results on the existence problem of regular Gabor frames. In dimension $d=1$, Seip-Wallstén and independently Lyubarskii proved that the Gaussian
Gabor system $\mathbf{G}(e^{-\pi t^2},\Lambda)$ is a frame for every lattice $\Lambda \subseteq \mathbb{R}^2$ satisfying $D(\Lambda)>1$ \cite{Seip1992,Seip+1992+91+106,lyubarskiiframes}. On the other hand, in dimensions $d > 1$, Gröchenig and Lyubarskii constructed lattices $\Lambda \subseteq \R^{2d}$ with $D(\Lambda) > 1$ for which the Gaussian Gabor system $\mathbf{G}(e^{-\pi |t|^2},\Lambda)$ fails to be a frame \cite{grochenig2020sampling}. This implies that the Gaussian cannot serve as a universal Schwartz window in all dimensions. The study of Gaussian Gabor frames in dimensions $d > 1$ remains an active research field. See, for example, the recent contributions by Luef and Wang \cite{luef2023gaussian} and by Romero, Ulanovskii and Zlotnikov \cite{romero2024sampling}. 

Pfander, Rashkov, and Wang demonstrated that in dimension $d = 2$ and for separable lattices that satisfy specific geometrical conditions, one can construct smooth functions $ g $ with compact support such that $\mathbf{G}(g,\Lambda)$ is a frame \cite{pfander2012geometric}. Their approach is based on an existence result of fundamental domains of pairs of lattices \cite{han2001lattice} (see also the recent contribution \cite{grepstad2026bounded} on bounded fundamental domains). 
B{\'e}dos, Enstad, and van Velthoven showed in \cite{bedos2022smooth} that for every $d 
\geq 1$ and every lattice $\Lambda \subseteq \R^{2d}$ satisfying Kleppner's condition \cite{Kleppner1962}, there exists a Schwartz function $g$ such that $\mathbf{G}(g,\Lambda)$ is a frame. Jakobsen and Luef proved the existence of a Schwartz function $g$ generating a Gabor frame in the case of non-rational lattices \cite{jakobsen2020duality}.
Enstad, Thiel, and Vilalta obtained a partial positive result for rational lattices \cite{enstad2025criteria} (see also \cite{enstad2026zstability}) by introducing an arithmetic invariant $n_\Lambda$ and proving that a rational lattice $\Lambda$ admits a Schwartz Gabor frame whenever
\begin{equation}\label{eq:nLambda_inequality}
    \frac{1}{D(\Lambda)} <1-\frac{d-1}{n_\Lambda}.
\end{equation}
The article \cite{liu2026full} also addresses the existence problem for certain irregular sets which are far from being lattices.
\subsection{}
Given a lattice $L \subset \R^d$ we denote by $L^*$ its dual lattice. Our next result gives an algebraic obstruction for the existence of a regular Gabor frame. In fact, our obstruction holds in the general setting of multi-window Gabor systems, which are systems of the form $\bigcup_{\ell=1}^r \mathbf G(g_\ell,\Lambda)$ for some $g_1, \dots, g_r \in \ltd$.

\begin{theorem}\label{thm:main}
Let $d > 1$ and let $L=A\Z^d$ be a lattice with $A \in \mathrm{GL}_d(\Q)$. If $\Lambda \subseteq \R^{2d}$ is a lattice containing $L \times L^*$ as a sublattice of index $N \in \N$, then the following are equivalent:
\begin{enumerate}
\item There exist $g_1,\dots,g_r\in C_Z(\mathbb R^d)$ such that
$
\bigcup_{\ell=1}^r \mathbf G(g_\ell,\Lambda)
$
is a frame for $L^2(\mathbb R^d)$,
\item $rN>d$.
\end{enumerate}
Moreover, if $rN>d$ then each $g_\ell$ may be chosen in such a way that $ g_\ell \in  \mathcal S(\mathbb R^d)$ and $\bigcup_{\ell=1}^r\mathbf G(g_\ell,\Lambda)$ is a Parseval frame.
\end{theorem}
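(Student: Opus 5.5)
The plan is to reduce, via the Zak transform, to a statement about common zeros of quasiperiodic functions. First normalize: applying the metaplectic operator associated with the symplectic matrix $\mathrm{diag}(A^{-1},A^{T})$ (a dilation $f\mapsto |\det A|^{1/2}f(A\,\cdot)$), we may assume $L=\Z^d$, so $\Lambda\supseteq \Z^d\times\Z^d$ with index $N$. Rationality of $A$ is not needed for this step; I expect it to enter only when checking that the Zak-continuity class $C_Z$ is preserved by the dilation, i.e.\ when comparing the Zak transform with respect to $\Z^d$ and with respect to $A\Z^d$. On the Zak side, the time-frequency shifts by $\Z^d\times\Z^d$ act as multiplications by the characters $e^{2\pi i(k\cdot v-n\cdot u)}$. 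The $N-1$ extra coset representatives $\mu_1=0,\mu_2,\dots,\mu_N$ of $\Lambda/(\Z^d\times\Z^d)$ act on $Zg$ as translations combined with phase factors.

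Expanding in the orthonormal basis of characters shows that $\bigcup_\ell \mathbf G(g_\ell,\Lambda)$ is a frame with bounds $A,B$ if and only if
\begin{equation}
A\,\mathrm{Id}\le \sum_{\ell=1}^r\sum_{j=1}^N \Phi_{\ell j}(z)\Phi_{\ell j}(z)^*\le B\,\mathrm{Id}
\end{equation}
for a.e.\ $z$. This is a Zibulski--Zeevi type criterion, written with a fibration over the quotient torus. Here $\Phi_{\ell j}=Z(\pi(\mu_j)g_\ell)$. Since $\pi(\mu_j)$ preserves $C_Z$, the $rN$ functions $F_m:=Z(\pi(\mu_j)g_\ell)$ are continuous and quasiperiodic, satisfying
\begin{equation}
F(u+n,v)=e^{2\pi i n\cdot v}F(u,v),\qquad F(u,v+k)=F(u,v).
\end{equation}
In the simplest normalization (the full fiber being $\C$), the lower frame bound then says exactly that the $F_m$ have no common zero on $\T^{2d}$. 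So (1) is equivalent to: there exist $rN$ continuous sections of the line bundle $E\to\T^{2d}$ defined by this quasiperiodicity, with no common zero. The possibly nontrivial vector-valued fiber when $\Lambda$ is not of product form is handled by passing to the finer lattice and working with sections of a bundle of rank equal to the fiber dimension.

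The key step, which I expect to be the main obstacle, is the characterization of when $m$ sections of $E$ can avoid a common zero. The bundle $E$ has first Chern class $c=c_1(E)$, the standard symplectic class $\sum_i du_i\wedge dv_i$, so $c^d\neq 0$ in $H^{2d}(\T^{2d})$ while $c^{d+1}=0$.

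\textbf{Obstruction.} Suppose $m$ sections have no common zero. They define a nowhere-vanishing section of $E\otimes\C^m$, so $\C^m=E^{\oplus m}$ splits as $\C\oplus Q$ with $Q$ of rank $m-1$. The total Chern class gives
\begin{equation}
c(Q)=(1+c)^m.
\end{equation}
Hence $c_d(Q)=\binom{m}{d}c^d$, and this must vanish once $d>m-1$. Since $\binom{m}{d}c^d\neq0$ whenever $d\le m$, we conclude $m\ge d+1$, i.e.\ $rN>d$. Equivalently, one can phrase this as a degree-theory argument for the map to $\C P^{m-1}$: the pullback of the hyperplane class is $c$, and $c^d\ne0$ forces $m-1\ge d$. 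The argument must be carried out in the rank-$N'$ vector version when the fiber is larger, tracking the Chern character carefully.

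\textbf{Sufficiency.} For $m=rN\ge d+1$ we need smooth sections without a common zero. Generic smooth sections already suffice, since the zero set of $d+1$ generic sections on a $2d$-dimensional manifold has expected real codimension $2d+2$, hence is empty. Taking them to be finite combinations of the Zak transforms of Hermite-type (Schwartz) functions, we get Schwartz windows by transversality. Schwartz class is dense in the sections via the Zak isomorphism.

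Finally, normalize to obtain a Parseval frame. Set $S(z)=\sum|F_m(z)|^2$, which is smooth and positive, and replace $F_m$ by $S^{-1/2}F_m$; this remains smooth and quasiperiodic, hence is the Zak transform of a Schwartz function. The one remaining point is to arrange that these new sections still arise from windows $g_\ell$ and their shifts $\pi(\mu_j)g_\ell$, with the coset structure respected. I would handle this by choosing the sections on the quotient torus first and then lifting them, so that the normalization $S^{-1/2}$ commutes with the covering symmetries.
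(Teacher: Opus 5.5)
Your necessity argument is the paper's route. You write $\Lambda$ as $N$ cosets of $H=\Z^{2d}$, apply Lemma~\ref{lma:Zak_frame} to the $rN$ continuous quasiperiodic functions $Z(\pi(\mu_j)g_\ell)$, and split a trivial line off $E^{\oplus m}$. The vector-valued fiber you worry about never occurs: $\Lambda\supseteq H$ always gives a multi-window system along $H$, and the criterion is scalar.

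There is an index slip, though. $Q$ has rank $m-1$, so the class forced to vanish is $c_m(Q)=c^m$. This class is nonzero for every $m\le d$: restricted to a $2m$-dimensional subtorus it integrates to $m!$. Using $c_d(Q)=\binom md c^d$ instead only rules out $m=d$, because $\binom md=0$ for $m<d$. Your $\C P^{m-1}$ formulation is correct as stated, since $h^d=0$ in $H^{2d}(\C P^{m-1})$ whenever $m-1<d$.

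The genuine gap is in sufficiency. Your transversality count treats the $rN$ functions as independent sections of $E$, but they are not independent. With $\mu_j=(a_j,b_j)$ we have $F_{\ell,j}(u,v)=e^{2\pi i\,b_j\cdot u}\,G_\ell(u-a_j,v-b_j)$, and only the $r$ quasiperiodic functions $G_\ell=Zg_\ell$ are free. So the genericity of $rN$-tuples of sections is beside the point. Genericity must be proved in the space of $r$-tuples $(G_1,\dots,G_r)$ drawn from a $K$-dimensional family. There the dimension count $2d+2r(K-N)<2rK$ only works if, for every $x$, the joint evaluation $G\mapsto (G(x-\mu_j))_{j=1}^N$ maps the family onto $\C^N$.

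That surjectivity is the missing ingredient. The paper obtains it from quasiperiodized bumps of radius below the separation $\delta=\min_{i\neq j}\mathrm{dist}_H(\mu_i,\mu_j)$. Near each point of a fundamental domain of $\Lambda$, these make the evaluation matrix diagonal with positive entries. The rank then extends to all $x$ because $\Lambda$ permutes the cosets, and Sard's theorem shows the bad coefficients form a null set.

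Your proposed fix, choosing sections on the quotient torus and lifting, does not supply this. When $N>1$, $E$ does not descend to $\R^{2d}/\Lambda$, because $\sum_i du_i\wedge dv_i$ is not integral on any lattice strictly containing $\Z^{2d}$. Even if it did, $\Lambda$-quasiperiodic $G_\ell$ would satisfy $|G_\ell(\cdot-\mu_j)|=|G_\ell|$. The translates would then add nothing, and you would need $r>d$.

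What does work is to view each $G_\ell$ as a section of the rank-$N$ pushforward of $E$ to $\R^{2d}/\Lambda$, with fiber $\bigoplus_j E_{x-\mu_j}$. You then run transversality for $r$ sections of that bundle, which again requires a finite family spanning every fiber. Once the structured family has no common zero, the normalization is automatic. The function $M(x)=\sum_{\ell,j}|G_\ell(x-\mu_j)|^2$ is $\Lambda$-periodic, so $G_\ell/\sqrt{M}$ is smooth, quasiperiodic, and gives the Parseval identity.
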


For the explicit lattice $\mathbb{Z}^d\times\bigl(\mathbb{Z}^{d-1}\times q^{-1}\mathbb{Z}\bigr)$ with $q \in \N$, the criterion of Enstad, Thiel, and Vilalta yields the existence of a frame $\mathbf{G}(g,\Lambda)$ with window $g \in \mathcal{S}(\Rd)$ whenever $q>d$ \cite[Example 6.3]{enstad2025criteria}. It follows from Theorem \ref{thm:main} that the complementary range $1\leq q\leq d$ rules out the existence of any $g \in \mathcal{S}(\Rd)$ generating a Gabor frame. In particular, this answers a problem raised in \cite[p.~10]{enstad2025criteria} on the existence of Schwartz functions generating a Gabor frame that violate the inequality \eqref{eq:nLambda_inequality}.

\subsection{}
In the situation of Theorem \ref{thm:main}, if $rN>d$, then it follows from
\cite[Theorem D]{enstad2025criteria} that there exist Schwartz functions
$g_1,\dots,g_r$ such that
$
\bigcup_{\ell=1}^r \mathbf G(g_\ell,\Lambda)
$
is a frame. Theorem \ref{thm:main} differs from the result in
\cite{enstad2025criteria} in two ways. First, we show that the frame can be
chosen to be Parseval. This property is particularly important in applications,
where the frame bounds determine the condition number of the associated frame
expansion. Second, in Section \ref{sec:construction} we provide, for every
dimension $d>1$, a rather explicit construction of a family of Schwartz functions $g_\ell$ yielding a Gabor Parseval frame. The geometrical idea of our construction is outlined in Section \ref{sec:geom_idea}. By comparison, the existence result
in \cite{enstad2025criteria} is based on methods from $C^*$-algebras and does
not provide further information on the structure of the Schwartz functions
$g_\ell$.

\subsection{}
We conclude this section by introducing a key ingredient in the proof of Theorem \ref{thm:main}, related to the study of zeros of quasiperiodic functions. In what follows we call a function $s : \Rd \times \Rd \to \C$ Zak-quasiperiodic (or simply quasiperiodic) if it satisfies the relation \begin{equation}\label{eq:quasi_periodicity}
    s(u+n,v+m)=e^{2\pi i\,n\cdot v}s(u,v)
\end{equation}
for every $u,v\in\mathbb R^d$ and every
$n,m\in\mathbb Z^d$. The notion of (Zak-)quasiperiodicity stems from the fact that the Zak transform of every $g \in \ltd$ is quasiperiodic.

The structure of the zero set of the Zak transform is a central topic in the analysis of Gabor frames. For instance, a result of Vinogradov and Ulitskaya establishes the existence of a unique zero for the Zak transform of totally positive functions \cite{VINOGRADOV201755}; this fact plays a significant role in the frame set conjecture of totally positive functions \cite{grochenig2023totally,GroechenigRomeroStoeckler}. It is well-known that every continuous quasiperiodic function has a zero. This result was proven by Boon and Zak \cite{boon1981amplitudes} and independently by Janssen \cite{janssen1982bargmann}. A topological proof of the result was obtained by Auslander and Tolimieri \cite[Theorem II.2]{auslander2006abelian} in the context of nilmanifolds. A proof based on Chern-Weil theory was given by Enstad \cite[Example 5.6]{enstad2020balian}.
Extensions of the result to the setting of locally compact abelian groups have been developed in \cite{enstad2022deformations, kaniuth1998zeros}.

The proof of Theorem \ref{thm:main} makes use (cf.~Lemma \ref{lma:Zak_frame}) of the following statement on common zeros of multiple quasiperiodic functions, which we prove in Section \ref{sec:zak_zero} using a topological argument involving Chern classes.

\begin{theorem}\label{thm:zak_zero}
Let $d,N\in \N$, and suppose that
$$
s_1,\dots,s_N:\mathbb R^d \times \mathbb R^d \to \mathbb C
$$
are continuous Zak-quasiperiodic functions. If $N \leq d$, then $s_1,\dots,s_N$ have a common zero, i.e., there exists $(u_0,v_0)\in\mathbb R^d\times\mathbb R^d$ such that
$$
s_1(u_0,v_0)=s_2(u_0,v_0)=\cdots=s_N(u_0,v_0)=0.
$$
If $N>d$, then the conclusion is false: there exist smooth Zak-quasiperiodic functions
$s_1,\dots,s_N :\mathbb R^d \times \mathbb R^d  \to \mathbb C$ (not all equal to zero) such that $s_1,\dots,s_N$ do not have a common zero.
\end{theorem}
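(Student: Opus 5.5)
Quasiperiodic functions are sections of a line bundle, so the plan is to compute that bundle's Chern class and use it in both directions. Continuous Zak-quasiperiodic functions are exactly the continuous sections of the complex line bundle $E\to\T^{2d}$ obtained as the quotient of $\R^{2d}\times\C$ by the $\Z^{2d}$-action $(u,v,z)\mapsto(u+n,v+m,e^{2\pi i n\cdot v}z)$. (This is a genuine action, since the cocycle $e^{2\pi i n\cdot v}$ is invariant under $v\mapsto v+m$.) The first step is to compute $c_1(E)\in H^2(\T^{2d};\Z)$. Writing the transition cocycle as $e^{2\pi i\sum_j n_j v_j}$, it splits as a tensor product of $d$ pullbacks of the standard degree-one bundle on the $2$-tori $\T^2_{(u_j,v_j)}$. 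Hence $c_1(E)=\sum_{j=1}^d \alpha_j$, where $\alpha_j=du_j\wedge dv_j$ up to sign. I would verify this with the connection $\nabla=d-2\pi i\sum_j u_j\,dv_j$, whose curvature is $-2\pi i\sum_j du_j\wedge dv_j$. Since $\alpha_j^2=0$ and the $\alpha_j$ commute, we get $c_1(E)^N=N!\sum_{|J|=N}\prod_{j\in J}\alpha_j$, which is nonzero in $H^{2N}(\T^{2d};\Z)$, a torsion-free group, precisely when $N\le d$.

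For the case $N\le d$, suppose $s_1,\dots,s_N$ had no common zero. Then $s=(s_1,\dots,s_N)$ is a nowhere-vanishing section of $E^{\oplus N}$. So $E^{\oplus N}\cong \underline{\C}\oplus F$ for a rank $N-1$ bundle $F$, and therefore the top Chern class vanishes: $c_N(E^{\oplus N})=c_1(E)^N=0$. (Here $c_N(E^{\oplus N})=c_1(E)^N$ by the Whitney sum formula.) This contradicts the computation above. The one technical point is that the standard Euler-class argument applies to continuous sections, so no smoothing is needed. If one prefers to work with smooth sections, one can approximate $s$ uniformly by smooth quasiperiodic sections via convolution in $(u,v)$, which preserves quasiperiodicity and nonvanishing.

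For the case $N>d$, we need a nowhere-vanishing section of $E^{\oplus N}$. It suffices to do this for $N=d+1$ and pad the remaining functions with zeros, which is permitted since only "not all zero" is required. I would argue via obstruction theory. $E^{\oplus(d+1)}$ has rank $d+1$, so its real rank is $2d+2>2d=\dim\T^{2d}$. The primary obstruction is the Euler class $e=c_{d+1}(E^{\oplus(d+1)})=c_1(E)^{d+1}$, which lies in $H^{2d+2}(\T^{2d};\Z)=0$. Better still, sphere-bundle fibers $S^{2d+1}$ are $2d$-connected, so a generic section misses zero by transversality, and a nowhere-zero smooth section exists. The paper advertises explicit constructions, though, so I would also try to give one. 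Take $s_k(u,v)=Z\varphi_k(u,v)$ for Schwartz $\varphi_k$, or products $\prod_j \theta_{k,j}(u_j,v_j)$ of one-dimensional quasiperiodic functions. Each factor $\theta_{k,j}$ would vanish only near a prescribed point $p_{k,j}$ in the $j$th torus factor, and the $p_{k,j}$ would be arranged so that each of the $d$ coordinates can "kill" at most one of the $d+1$ functions at any given point. By pigeonhole, at every point some $s_k$ is then nonzero.

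The main obstacle is this explicit construction. Each $\theta_{k,j}$ vanishes somewhere, and the zero set of $s_k$ is a union of codimension-$2$ sets $\{(u_j,v_j)=p_{k,j}\}$. So I need the $d\times(d+1)$ array of points $p_{k,j}$ to satisfy $p_{k,j}\ne p_{k',j}$ for $k\ne k'$. Then at any point $x$, each coordinate $j$ can vanish for at most one $k$, so at most $d$ of the $s_k$ vanish at $x$, and since there are $d+1$ of them, some $s_k(x)\neq0$. The ingredient I still need is a smooth one-dimensional quasiperiodic $\theta$ with exactly one zero per period cell. A translate of the Zak transform of the Gaussian (a Jacobi theta function) provides this, and translating it gives distinct zero locations while preserving quasiperiodicity up to a unimodular constant factor.
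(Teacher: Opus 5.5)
Your argument for $N\le d$ is the paper's: the same bundle, the same connection $d-2\pi i\sum_j u_j\,dv_j$, $c_N(E^{\oplus N})=c_1(E)^N$ by Whitney, and splitting off the trivial summand spanned by a nowhere-zero section. The only cosmetic difference is that you read off $c_1(E)^N\neq0$ directly in the exterior algebra, while the paper restricts to the subtorus $Y$ and integrates to get $N!$.

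For $N>d$ your route differs. Your transversality argument (real rank $2d+2>2d$) already proves existence non-constructively. It is essentially the Sard argument of the paper's Parseval-frame construction, specialized to $\Lambda=\Z^{2d}$ and $r=d+1$. Your explicit construction also differs. You take $d+1$ one-dimensional quasiperiodic functions $\theta_0,\dots,\theta_d$ with pairwise disjoint zero sets, set $s_k(u,v)=\prod_j\theta_k(u_j,v_j)$, and conclude by pigeonhole, since each pair $(u_j,v_j)$ kills at most one $s_k$. The paper instead uses only two one-dimensional functions $a,b$ without common zero. It lets the coefficients of $\prod_j(a_j+Tb_j)$ handle the combinatorics, because a product of nonzero linear polynomials is nonzero. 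Your functions are elementary tensors: with $\theta_k(u,v)=Zg(u-a_k,v)$ for the Gaussian $g$, $s_k$ is the Zak transform of the $d$-dimensional Gaussian translated by $a_k(1,\dots,1)$. The paper's $S_k$ are sums of $\binom{d}{k}$ tensors, but they are built from a more trivial input. You do not even need the theta-function zero count. The Zak transform of a bump positive exactly on $(c,c+1)$ vanishes exactly on $\{u\in c+\Z\}$; the paper's $a,b$ are the cases $c=-\tfrac12,0$. So $d+1$ values of $c$ distinct mod $1$ suffice.

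There are two small slips, neither affecting the conclusion. First, a full translate $\theta(u-a,v-b)$ is not Zak-quasiperiodic, because it picks up the $n$-dependent factor $e^{-2\pi i nb}$. Either translate only in $u$, which preserves quasiperiodicity exactly and gives the disjoint zero sets $(\tfrac12+a_k,\tfrac12)+\Z^2$, or multiply by $e^{2\pi i bu}$. Second, ordinary convolution in $v$ destroys quasiperiodicity; the twisted average $\int\rho(x,y)e^{2\pi i y\cdot u}s(u-x,v-y)\,dx\,dy$ preserves it. As you note, though, the Euler-class argument needs no smoothing.
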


\begin{figure}[h]
\centering \hspace*{-0.5cm}
   \includegraphics[width=12.5cm]{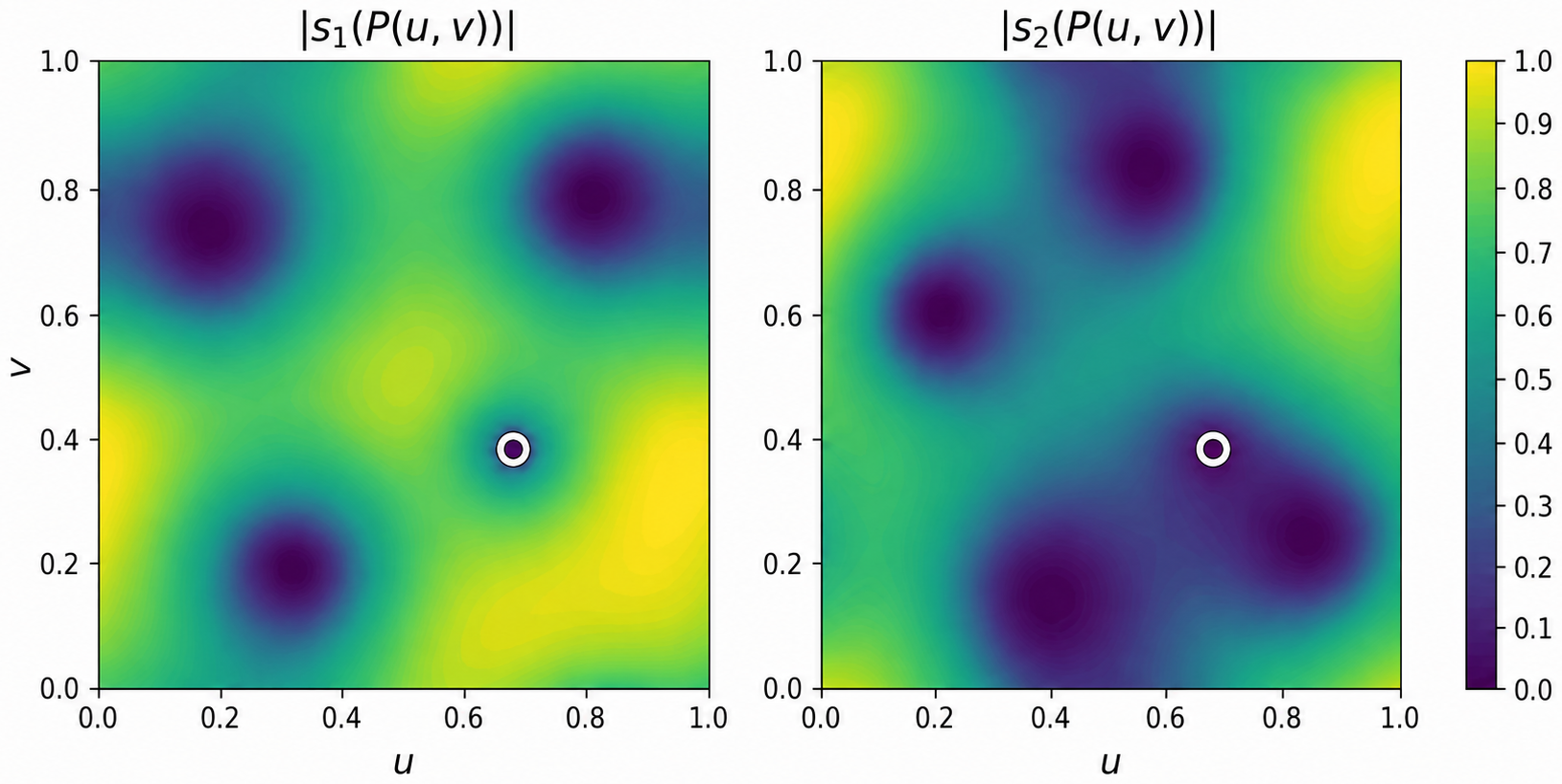}
\caption{Visualization of Theorem \ref{thm:zak_zero} in the case $d=N=2$. The figure shows the heatmap of two quasiperiodic functions $s_1,s_2 : \R^4 \to \C$ on the intersection of the 4-cube $[0,1]^4$ with the plane $\mathcal{P}$, parametrized by $P : \R^2 \to \R^4$ with $P(u,v) = (u,v,v,1-u)$. The functions $s_1$ and $s_2$ are chosen as products of the theta function $\vartheta_3$ and certain sine functions. The white circle marks a common zero of $s_1$ and $s_2$. We refer to the GitHub repository for the precise definitions of $s_1,s_2$ and the Python code used to create the figure.}
\label{fig:intro_plot}
\end{figure}

\subsection*{Usage of Large Language Models} 

Large language models, used under the authors guidance, played a significant role in the development of this work.

We began with the property that for the lattice $\Z^{2d}$, no function $g \in C_Z(\mathbb{R}^d)$ can generate a Gabor frame $\mathbf{G}(g,\Z^{2d})$. Indeed, the Zak transform $Zg$ is continuous and quasiperiodic, and therefore must vanish at some point.

The remaining open case for the existence of Schwartz Gabor frames along lattices was that of rational lattices. If $\Lambda$ is a lattice containing $\Z^{2d}$ as a sublattice of index $N$, Lemma \ref{lma:Zak_frame} shows that the frame property is equivalent to the non-vanishing of a sum of squared moduli of Zak transforms, hence a sum of squared moduli of quasiperiodic functions.

For $N=2$ a contradiction to the lower frame property can be obtained by showing that two quasiperiodic functions have a common zero. Auslander and Tolimieri gave in \cite[p.~18, Theorem II.2]{auslander2006abelian} a topological proof that a continuous quasiperiodic function has a zero. In the time--frequency analysis literature, this approach is somewhat nonstandard, since more elementary proofs of this fact are available. In the approach of Auslander and Tolimieri, a continuous quasiperiodic function is interpreted as a section of a nontrivial complex line bundle over the 2-dimensional torus $\T^2$, and the final conclusion is obtained via a contradiction involving winding numbers. Under an additional smoothness assumption the winding number step can be replaced by degree theory \cite{milnor1965topology,guillemin2025differential} to obtain a common zero result for two quasiperiodic functions.

At this stage, we used GPT-5.4 to explore whether methods from homology theory could provide criteria for common zeros of quasiperiodic functions if a smoothness assumption is replaced by a continuity assumption. Building on the perspective of winding numbers and degree theory, and with assistance from GPT-5.4, we identified a possible approach to proving a common-zero result for two continuous quasiperiodic functions using Chern classes. After substantial refinement, we obtained that $N$ quasiperiodic functions in $\R^d \times \R^d$ must have a common zero whenever $N \leq d$. This led us to the conclusion that for
$
\Lambda=\mathbb{Z}^d\times\bigl(\mathbb{Z}^{d-1}\times q^{-1}\mathbb{Z}\bigr),
$
there exists no Schwartz function $g$ such that $\mathbf{G}(g,\Lambda)$ is a frame. We then generalized this argument to obtain Theorem \ref{thm:main}.

For the case $q>d$, where Schwartz windows do exist, we developed a construction which was again based on the theory of Chern classes. This was then refined into an elementary construction yielding a Parseval frame.

Finally, we remark that the non-existence result of Schwartz Gabor frames for the lattices in Theorem \ref{thm:main} has been formally verified in Lean 4 for every $d>1$ (see the Appendix).

In this project, GPT-5.4 was used mainly for mathematical exploration, while Claude Opus 4.7 assisted with the Lean formalization. The authors independently verified all final statements and substantially rewrote every part that was influenced by LLM-generated material.

\section{Preliminaries}

This section is devoted to a discussion of preliminary concepts and results that will be used throughout the remainder of the article. These include properties of the Zak transform and the Wiener space, as well as topological preliminaries including Chern classes.

\subsection{The Zak transform}

In what follows we define for $x,\omega  \in \Rd$ the translation and modulation operators via $T_xf(t):=f(t-x)$ and $M_\omega f(t):=e^{2\pi i\,\omega\cdot t}f(t)$, respectively. Moreover, we define $\pi(x,\omega):=M_\omega T_x$.
For a Schwartz function $f\in\mathcal S(\mathbb R^d)$, we define the Zak transform $Zf$ by
$$
Zf(u,v):=\sum_{n\in\mathbb Z^d} f(u-n)e^{2\pi i\,n\cdot v},
\quad u,v\in\mathbb R^d.
$$
It is well-known that $Z$ extends from $\mathcal{S}(\Rd)$ to a unitary operator $Z:L^2(\mathbb R^d)\to L^2([0,1]^{2d})$ so that $Zf$ is quasiperiodic, i.e.,
$$
Zf(u+n,v+m)
=
e^{2\pi i\,n\cdot v}Zf(u,v),
\quad n,m\in\mathbb Z^d, \quad u,v \in \Rd.
$$
Moreover, for every $(a,b)\in\mathbb R^d\times\mathbb R^d$, one has
\begin{equation}\label{eq:Zak_TS_property}
    Z(\pi(a,b)f)(u,v)
=
e^{2\pi i\,b\cdot u}Zf(u-a,v-b).
\end{equation}
In particular, for every $a,b \in \R^d$ the space $C_Z(\mathbb R^d)$ is invariant under $\pi(a,b)$. The frame property of a multi-window Gabor system is directly related to lower and upper bounds on the Zak transform. Precisely, the following holds \cite[p.~159]{Groechenig}.

\begin{lemma}\label{lma:Zak_frame}
    For $g_1,\dots,g_M\in \ltd $ the system
$
\bigcup_{j=1}^M\mathbf G(g_j,\Z^{2d})
$
is a frame for $L^2(\mathbb R^d)$ with frame constants $A,B>0$ if and only if
$$
A\le \sum_{j=1}^M |Zg_j(u,v)|^2\le B
$$
for almost every $(u,v)\in[0,1]^{2d}$.
\end{lemma}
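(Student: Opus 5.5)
The plan is to reduce the frame property to a pointwise statement about the Zak transforms. The Zak transform $Z$ is unitary from $L^2(\mathbb R^d)$ onto $L^2([0,1]^{2d})$, and we work with $Z$ conjugating time-frequency shifts. For $(k,l)\in\Z^d\times\Z^d$, the identity \eqref{eq:Zak_TS_property} together with quasiperiodicity gives
$$
Z(\pi(k,l)f)(u,v)=e^{2\pi i\, l\cdot u}e^{-2\pi i\, k\cdot v}\,Zf(u,v).
$$
So under $Z$ the system $\{\pi(k,l)g_j\}$ becomes $\{e_{k,l}\,Zg_j\}$, where $e_{k,l}(u,v)=e^{2\pi i(l\cdot u-k\cdot v)}$ runs through an orthonormal basis of $L^2([0,1]^{2d})$.

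The frame sum is then computed directly. For $F=Zf$,
$$
\sum_{j}\sum_{k,l}|\langle f,\pi(k,l)g_j\rangle|^2=\sum_j\sum_{k,l}\bigl|\langle F\,\overline{Zg_j},e_{k,l}\rangle\bigr|^2=\sum_j\int_{[0,1]^{2d}}|F|^2|Zg_j|^2 ,
$$
by Parseval, with the understanding that $F\overline{Zg_j}\in L^2$. Equivalently,
$$
\sum_j\sum_{k,l}|\langle f,\pi(k,l)g_j\rangle|^2=\int_{[0,1]^{2d}} |F|^2\,G,\qquad G:=\sum_j|Zg_j|^2 .
$$

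For the "if" direction, assume $A\le G\le B$ almost everywhere. Then $F\overline{Zg_j}\in L^2$ for every $F\in L^2$, and the frame bounds follow at once from the unitarity of $Z$.

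For the "only if" direction, suppose the frame inequalities hold. We first apply them to bounded $F$. For such $F$ the product $F\overline{Zg_j}$ is in $L^2$ because $Zg_j\in L^2$, so the computation above is valid. This gives
$$
A\int|F|^2\le\int|F|^2G\le B\int|F|^2 \quad\text{for all } F\in L^\infty([0,1]^{2d}).
$$
Now take $F=\mathbf 1_E$, where $E$ is either $\{G>B+\varepsilon\}$ or $\{G<A-\varepsilon\}$, intersected with a bounded set of finite measure. Either choice contradicts the inequality unless $E$ is a null set, so $A\le G\le B$ almost everywhere.

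The only delicate step is justifying the Parseval identity when $F\overline{Zg_j}$ is not a priori in $L^2$. Restricting to bounded $F$ in the necessity direction handles this, and in the sufficiency direction the upper bound $G\le B$ guarantees it.
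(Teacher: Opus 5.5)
Your proof is correct. The paper does not prove this lemma but cites Gröchenig's book (p.~159), and your argument is the standard one behind that citation: conjugating by $Z$ turns $\pi(k,l)$, $(k,l)\in\Z^{2d}$, into multiplication by the exponentials $e_{k,l}$, so Parseval identifies the frame operator with multiplication by $\sum_j|Zg_j|^2$. Your care with the integrability of $F\overline{Zg_j}$ is fine but not strictly needed, since $F\overline{Zg_j}\in L^1([0,1]^{2d})$, and for any $L^1$ function the sum of squared Fourier coefficients equals its squared $L^2$ norm as an element of $[0,\infty]$.
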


We also make use of the following classical result \cite[Theorem 8.2.5]{Groechenig}, which relates the Schwartz space to smooth quasiperiodic functions.

\begin{lemma}\label{lma:Zak_bijection}
The Zak transform is a linear bijection from $\mathcal{S}(\mathbb{R}^d)$ onto the space of smooth quasiperiodic functions on $\R^{2d}$.
\end{lemma}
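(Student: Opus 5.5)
We prove the two directions separately: first that $Z$ maps $\mathcal S(\mathbb R^d)$ injectively into smooth quasiperiodic functions, then that it is onto, using the explicit inverse
$$
f(u) := \int_{[0,1]^d} s(u,v)\,dv .
$$

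\emph{Well-definedness and injectivity.} Let $f\in\mathcal S(\mathbb R^d)$. For $u$ in a compact set, every derivative $\partial_u^\alpha\partial_v^\beta$ of the general term of $\sum_n f(u-n)e^{2\pi i n\cdot v}$ is bounded by $C_{\alpha,\beta}(1+|n|)^{|\beta|}\sup_{u}|\partial^\alpha f(u-n)|$. Since $f$ is Schwartz, this is summable in $n$. Hence the series and all its termwise derivatives converge locally uniformly, and $Zf\in C^\infty(\mathbb R^{2d})$. Quasiperiodicity follows from reindexing the series. Injectivity on $\mathcal S(\mathbb R^d)$ follows from the unitarity of $Z$ on $L^2(\mathbb R^d)$, since the pointwise series agrees with the $L^2$ extension for Schwartz $f$.

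\emph{Surjectivity.} Let $s$ be smooth and quasiperiodic, and define $f$ as above. Taking $n=0$ in \eqref{eq:quasi_periodicity} shows that $v\mapsto s(u,v)$ is $\mathbb Z^d$-periodic. Its Fourier coefficients are
$$
c_n(u)=\int_{[0,1]^d}s(u,v)e^{-2\pi i n\cdot v}\,dv .
$$
Quasiperiodicity in $u$ gives $s(u-n,v)=e^{-2\pi i n\cdot v}s(u,v)$, so $c_n(u)=f(u-n)$. Because $s(u,\cdot)$ is smooth and periodic, its Fourier series converges absolutely and pointwise. Therefore
$$
s(u,v)=\sum_n f(u-n)e^{2\pi i n\cdot v}=Zf(u,v),
$$
provided we show $f\in\mathcal S(\mathbb R^d)$.

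\emph{$f$ is Schwartz (the main step).} Differentiating the quasiperiodicity relation in $u$ shows that $|\partial_u^\alpha\partial_v^\beta s(u,v)|$ is $\mathbb Z^{2d}$-periodic in $(u,v)$. Hence it is bounded on $\mathbb R^{2d}$ by its supremum over $[0,1]^{2d}$. Now write any $x\in\mathbb R^d$ as $x=u-n$ with $u\in[0,1]^d$ and $n\in\mathbb Z^d$. Differentiating under the integral and integrating by parts $k$ times in $v$ (no boundary terms, by periodicity) gives
$$
|n|^{2k}\,|\partial^\alpha f(u-n)| = |n|^{2k}|\partial_u^\alpha c_n(u)| \le C\sup_{[0,1]^{2d}}\bigl|\Delta_v^{k}\partial_u^\alpha s\bigr|,
$$
uniformly in $u\in[0,1]^d$. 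Since $|x|\le |n|+\sqrt d$, this yields $\sup_x(1+|x|)^{2k}|\partial^\alpha f(x)|<\infty$ for all $k$ and $\alpha$, so $f\in\mathcal S(\mathbb R^d)$.

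The only point needing care is this uniform decay estimate, and it reduces entirely to the periodicity of $|\partial^\alpha_u\partial^\beta_v s|$ together with integration by parts in $v$. Linearity of $Z$ is immediate.
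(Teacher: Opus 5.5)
The paper gives no proof of this lemma; it is quoted from Gr\"ochenig's book (Theorem 8.2.5). Your argument is a correct self-contained proof, with one caveat. It gets smoothness of $Zf$ from locally uniform convergence of the termwise differentiated series. It gets surjectivity from the inversion formula $f(u)=\int_{[0,1]^d}s(u,v)\,dv$, the identity $c_n(u)=f(u-n)$ for the Fourier coefficients of the $\mathbb Z^d$-periodic function $s(u,\cdot)$, and rapid decay of $\partial^\alpha f$ by integrating by parts in $v$.

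The caveat is your assertion that $|\partial_u^\alpha\partial_v^\beta s|$ is $\mathbb Z^{2d}$-periodic, and hence bounded on $\mathbb R^{2d}$ by its supremum over $[0,1]^{2d}$. This is false when $\beta\neq 0$. Differentiating the quasiperiodicity relation in $v_j$ gives $(\partial_{v_j}s)(u+n,v+m)=e^{2\pi i n\cdot v}\bigl(\partial_{v_j}s(u,v)+2\pi i n_j\,s(u,v)\bigr)$, which grows linearly in $n$. Concretely, take $d=1$ and $s=Z\psi$ with $\psi\in C_c^\infty(\mathbb R)$ supported in $[0,1]$. Then $|\partial_v s(u,v)|=2\pi|n|\,\psi(u-n)$ for $u\in(n,n+1)$.

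What is true is weaker. First, $\partial_u^\alpha s$ is again quasiperiodic, so $|\partial_u^\alpha s|$ is $\mathbb Z^{2d}$-periodic. Second, every $\partial_u^\alpha\partial_v^\beta s$ is $\mathbb Z^d$-periodic in $v$ alone.

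Fortunately, you never actually use the false claim. In your displayed estimate $u$ ranges over $[0,1]^d$ and $v$ is integrated over $[0,1]^d$. So the right-hand side is the supremum of a continuous function over the compact cube $[0,1]^{2d}$, which is finite. The only periodicity needed is the $v$-periodicity of $\partial_u^\alpha s(u,\cdot)$, which removes the boundary terms in the integration by parts.

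Delete that sentence and correct your closing remark, which attributes the decay estimate to it. With that change the proof is complete. For injectivity you could also avoid unitarity: termwise integration gives $\int_{[0,1]^d}Zf(u,v)\,dv=f(u)$ for $f\in\mathcal S(\mathbb R^d)$.
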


\subsection{The Wiener space}\label{sec:wiener}

A function $f \in L^\infty(\Rd)$ belongs to the Wiener space $W(\Rd)$ if
$$
\sum_{n \in \Z^d} \| f(\cdot + n) \|_{L^\infty([0,1]^d)} < \infty.
$$
Moreover, $f$ belongs to the Fourier invariant Wiener space $W_0(\Rd)$ if both $f$ and $\hat f$ belong to $W(\Rd)$, where $\hat f(\xi) = \int_{\Rd} f(x) e^{-2\pi i x \cdot \xi} \, dx$ denotes the Fourier transform of $f$. It is well-known that $W_0(\Rd) \subseteq C_Z(\Rd)$ \cite{Groechenig}. The next example shows that this inclusion is in fact proper.

\begin{example}
    Consider the function $a : \R \to \R$ defined by
    \begin{equation}\label{eq:sine_series}
        a(t) = \sum_{n=2}^\infty \frac{1}{n \log n} \sin (2\pi nt).
    \end{equation} 
    By a classical theorem of Chaundy and Jolliffe \cite{chaundy1917uniform} on uniform convergence of monotone sine series, it follows that the series in \eqref{eq:sine_series} converges uniformly and defines a 1-periodic continuous function with $a(0)=a(1)=0$. Let $f \in W(\R)$ be defined by $f = \chi_{[0,1]} a$ where $\chi_S$ denotes the characteristic function of a set $S \subseteq \R$. Then it follows from the definition of the Zak transform that for $x=m+r$ with $m \in \Z$ and $r \in [0,1)$, we have
    $$
    Zf(x,\omega) = a(r) e^{2\pi i m \omega}.
    $$
    Hence, $f \in C_Z(\R)$. On the other hand, the Fourier transform of $f$ evaluated at $n \in \N$ with $n \geq 2$ satisfies
    $
    \hat f(n) = \frac{1}{2i n \log n}
    $
    and therefore $f \not \in W_0(\R)$.

    For $d > 1$ we consider the product
    $$
    F(x_1, \dots, x_d) := f(x_1) \prod_{j=2}^d \eta(x_j)
    $$
    with $\eta(t) = \sin(\pi t) \chi_{[0,1]}(t)$. Then $F \in C_Z(\Rd)$ but
    $$
    \hat F(n,0,\dots, 0) = \hat f(n) \hat \eta(0)^{d-1}, \quad \hat \eta(0) = \frac 2 \pi \neq 0.
    $$
    The same argument as before implies that $F \not \in W_0(\Rd)$.
\end{example}

Finally, $W_0(\Rd)$ contains the Feichtinger algebra $S_0(\Rd)$ as a subspace and this subspace is proper according to Losert's theorem \cite[Theorem 2]{losert1980characterization}. For the fact that the Schwartz space $\mathcal{S}(\Rd)$ is a proper subspace of $S_0(\Rd)$, we refer to Feichtinger's original paper \cite{feichtinger1981new}.

\subsection{Chern classes}

Throughout this section, all vector
bundles are complex vector bundles unless explicitly stated otherwise.

\medskip
Let $X$ be a topological space. For an Abelian group $A$,
$
H^q(X;A)
$
denotes the $q$-th singular cohomology group of $X$ with coefficients in $A$.
The cohomology groups form a graded ring under the cup product
$$
\smile:
H^p(X;A)\times H^q(X;A)\to H^{p+q}(X;A),
$$
which is graded-commutative: $\alpha\smile\beta
=
(-1)^{pq}\beta\smile\alpha$.
If $X$ is a smooth manifold, then de Rham cohomology identifies
$H^q(X;\mathbb R)$ with closed smooth $q$-forms modulo exact $q$-forms.
Under this identification, the cup product corresponds to the wedge product
of differential forms. Thus, if $\omega$ is a closed differential form, we
write
$
[\omega]\in H^q(X;\mathbb R)
$
for its cohomology class.

\medskip
A complex vector bundle of rank $r$ over a topological space $X$ is a map
$
\pi:E\to X
$
such that for every $x\in X$, the fiber
$
E_x:=\pi^{-1}(x)
$
is a complex vector space isomorphic to $\mathbb C^r$, and such that locally
over $X$ the bundle is isomorphic to a product:
$
\pi^{-1}(U)\cong U\times\mathbb C^r.
$
A section of $E$ is a continuous map
$
S:X\to E
$
such that
$
\pi(S(x))=x
$
for every $x\in X$.
The trivial complex line bundle over $X$ is denoted by
$
\underline{\mathbb C}:=X\times\mathbb C\to X.
$
More generally, the trivial rank-$r$ complex vector bundle is
$
\underline{\mathbb C}^r:=X\times\mathbb C^r\to X.
$
If $E\to X$ and $F\to X$ are complex vector bundles, their direct sum is the
bundle
$
E\oplus F\to X
$
whose fiber over $x$ is
$
(E\oplus F)_x=E_x\oplus F_x.
$

\medskip
To every complex vector bundle $E\to X$ of rank $r$, one associates cohomology
classes
$$
c_k(E)\in H^{2k}(X;\mathbb Z),
\quad k=0,1,\dots,r,
$$
called the \emph{Chern classes} of $E$. The class $c_k(E)$ has cohomological degree
$2k$.
The total Chern class is the formal sum
$$
c(E):=1+c_1(E)+c_2(E)+\cdots+c_r(E).
$$
We record the following properties of Chern classes \cite[Chapter 14]{milnor1974characteristic}.

\begin{lemma}
Let $E,F\to X$ be complex vector bundles.
\begin{enumerate}
    \item If $E$ has complex rank $r$, then
    $
    c_k(E)=0
    $
    for all $k > r$.

    \item The zeroth Chern class is
    $
    c_0(E)=1\in H^0(X;\mathbb Z).
    $
    \item The trivial complex line bundle has total Chern class
    $
    c(\underline{\mathbb C})=1.
    $
    \item If $L\to X$ is a complex line bundle, then
    $
    c(L)=1+c_1(L).
    $
    \item The Whitney product formula holds:
    $
    c(E\oplus F)=c(E)c(F).
    $
\end{enumerate}
\end{lemma}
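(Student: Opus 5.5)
We plan to derive all five properties from the Milnor--Stasheff construction of Chern classes in \cite[Chapter 14]{milnor1974characteristic}, which proceeds by induction on the rank. For a rank-$r$ bundle $E\to X$ with underlying oriented real bundle $E_{\mathbb R}$, one sets $c_r(E):=e(E_{\mathbb R})\in H^{2r}(X;\mathbb Z)$, the Euler class. Next let $E_0$ be the complement of the zero section, with projection $\pi_0:E_0\to X$. Over $E_0$ the pullback $\pi_0^*E$ contains a canonical trivial line subbundle, and its orthogonal complement $E'$ has rank $r-1$. The Gysin sequence shows that $\pi_0^*:H^{j}(X)\to H^{j}(E_0)$ is an isomorphism for $j<2r-1$. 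For $k<r$ one therefore defines $c_k(E):=(\pi_0^*)^{-1}c_k(E')$, and one puts $c_k(E):=0$ for $k>r$.

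With this definition, (1) and (2) are built in: the induction terminates at rank $0$ with $c_0=1$ and all higher classes zero. Naturality of the Euler class and of the Gysin sequence gives $c_k(f^*E)=f^*c_k(E)$. Property (3) then follows because $\underline{\mathbb C}$ is pulled back from a point $p$ via $X\to p$, and $H^{2}(p;\mathbb Z)=0$; hence $c_1(\underline{\mathbb C})=0$, and by (1) all higher classes vanish, so $c(\underline{\mathbb C})=1$. Property (4) is simply (1) and (2) specialized to $r=1$.

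The main obstacle is the Whitney formula (5). The plan is to reduce to the universal case. Any pair $(E,F)$ of ranks $m,n$ over a paracompact $X$ is pulled back from $\gamma^m\times\gamma^n$ over $G_m(\C^\infty)\times G_n(\C^\infty)$. By naturality it therefore suffices to verify
\[
c(\gamma^m\times\gamma^n)=c(\gamma^m)\times c(\gamma^n)
\]
there. For this we would use that $H^*(G_m(\C^\infty);\mathbb Z)$ is the polynomial ring $\mathbb Z[c_1,\dots,c_m]$, which is proved inductively by the same Gysin argument. Consequently the cohomology of the product is the tensor product of the two polynomial rings, by the Künneth theorem, which applies because both rings are free.

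The final step is an induction on $m+n$. The case where one factor is trivial is immediate from (3) and naturality. For the inductive step we would compare both sides after pulling back to $(E\oplus F)_0$. There the splitting off of the canonical line reduces the rank, and the injectivity of $\pi_0^*$ in low degrees identifies all classes except the top one. The top class $c_{m+n}$ is handled separately by the product formula for Euler classes, $e(E_{\mathbb R}\oplus F_{\mathbb R})=e(E_{\mathbb R})\smile e(F_{\mathbb R})$, which follows from the multiplicativity of Thom classes.

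Since these are standard facts, in the paper itself we would simply cite \cite[Chapter 14]{milnor1974characteristic} and not reproduce this argument.
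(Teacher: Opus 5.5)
Citing \cite[Chapter 14]{milnor1974characteristic} is exactly what the paper does; it gives no argument of its own. Your outlines of (1)--(4) are fine. However, read as a summary of that source, your sketch of the Whitney formula (5) has a gap in the inductive step.

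Over $(E\oplus F)_0$ the tautological nowhere-zero section is $v=(e,f)$. Its components are in general both nonzero, so the line you split off lies in $E\oplus F$ but in neither summand. In the universal case $\pi_0^*E$ even has nonzero Euler class $\pi_0^*(c_m\times 1)$, because $\pi_0^*$ is injective below degree $2(m+n)$; so it cannot split off a line at all.

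The complement $(E\oplus F)'$ has rank $m+n-1$, but it comes with no splitting into summands of ranks $(m-1,n)$ or $(m,n-1)$. So the induction hypothesis says nothing about $c((E\oplus F)')$. The identity you would need below degree $2(m+n)$ is $c((E\oplus F)')=c(\pi_0^*E)\,c(\pi_0^*F)$. Since $\pi_0^*E\oplus\pi_0^*F\cong\underline{\C}\oplus(E\oplus F)'$, this is just the Whitney formula for a pair of the same ranks $(m,n)$ over $(E\oplus F)_0$. The step is therefore circular, and the polynomial-ring and K\"unneth input you set up is never used.

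The missing idea, which is Milnor--Stasheff's actual argument, is to delete the zero section of one factor at a time and then use the universal ring for a divisibility argument. Write $G_m=G_m(\C^\infty)$ and $H^*(G_m\times G_n;\Z)=\Z[c_1,\dots,c_m]\otimes\Z[c_1',\dots,c_n']$.

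Pull back along $E_0(\gamma^m)\times G_n\to G_m\times G_n$. There the factor $\gamma^m$ splits off a trivial line, so the induction hypothesis shows that $\Delta:=c(\gamma^m\times\gamma^n)-c(\gamma^m)\times c(\gamma^n)$ pulls back to zero. By the Gysin sequence, the kernel of this pullback is the ideal generated by the Euler class $c_m\times 1$. Symmetrically, $\Delta$ lies in the ideal generated by $1\times c_n'$.

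Over a general base this would only show that $\Delta$ vanishes below degree $2\max(m,n)$. In the polynomial ring, however, two distinct generators are coprime, so $\Delta$ is a multiple of $c_m\times c_n'$, which has degree $2(m+n)$. Hence $\Delta$ vanishes in all lower degrees, and the top degree is the Euler class identity $e(\xi\times\eta)=e(\xi)\times e(\eta)$ you already mention.

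Note also that the fact $c(\omega\oplus\underline{\C})=c(\omega)$ used here is not a formal consequence of (3) and naturality; it is itself a case of (5). It is proved directly from the inductive definition, using the section $x\mapsto(0,1)$ of $(\omega\oplus\underline{\C})_0$, along which the complement bundle pulls back to $\omega$.
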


We also make use of the following statement from \cite{milnor1974characteristic}.

\begin{lemma}
Let $E\to X$ be a complex vector bundle of rank $N$ over a compact Hausdorff
space $X$. If $E$ admits a nowhere-zero continuous section, then
$
c_N(E)=0.
$
\end{lemma}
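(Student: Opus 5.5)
The plan is to split off a trivial line subbundle spanned by the section and then use the Whitney product formula together with the vanishing of Chern classes above the rank.

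Let $S:X\to E$ be a nowhere-zero continuous section. Note that $N\ge 1$ is automatic, since a rank-$0$ bundle has no nonzero vectors.

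\emph{Step 1: the line subbundle.} Define
$$
\ell:=\{(x,\lambda S(x)) : x\in X,\ \lambda\in\C\}\subseteq E .
$$
The map $X\times\C\to \ell$, $(x,\lambda)\mapsto \lambda S(x)$, is a continuous fiberwise linear isomorphism. So $\ell$ is a subbundle of $E$ isomorphic to the trivial line bundle $\underline{\C}$. Local triviality of $\ell$ as a subbundle is checked in a local frame of $E$: on a trivializing chart $S$ is a nonvanishing $\C^N$-valued function, which can be completed locally to a frame.

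\emph{Step 2: a Hermitian metric and a complement.} Since $X$ is compact Hausdorff, it is paracompact and admits partitions of unity subordinate to a finite trivializing cover. Glue the standard inner products on the local trivializations $U_i\times\C^N$ with such a partition of unity. This gives a continuous Hermitian metric $\langle\cdot,\cdot\rangle$ on $E$. Let $F:=\ell^\perp$ be the fiberwise orthogonal complement. To show $F$ is a rank-$(N-1)$ subbundle, I would apply Gram--Schmidt, starting from $S$, to a local frame of $E$ on each chart. This yields a local orthonormal frame whose first vector spans $\ell$, and the remaining $N-1$ vectors give a local trivialization of $F$. Hence
$$
E\cong \ell\oplus F\cong \underline{\C}\oplus F .
$$

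\emph{Step 3: Chern class computation.} By the Whitney product formula and $c(\underline{\C})=1$,
$$
c(E)=c(\underline{\C})\,c(F)=c(F).
$$
Comparing components in degree $2N$ gives $c_N(E)=c_N(F)$. Since $F$ has rank $N-1<N$, the vanishing property of Chern classes gives $c_N(F)=0$, hence $c_N(E)=0$. Here I also use that Chern classes are invariant under bundle isomorphism, which is built into their definition.

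I expect the only step requiring genuine care to be Step 2, namely the existence of the Hermitian metric and the verification that the orthogonal complement is locally trivial. This is exactly where compactness (or at least paracompactness) of $X$ is used. Everything else is a formal consequence of the listed axioms for Chern classes.
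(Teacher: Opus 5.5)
Your proof is correct and takes the same route as the paper. The lemma itself is only cited from Milnor--Stasheff, but Step 4 of the proof of Theorem~\ref{thm:zak_zero} carries out exactly your argument: a Hermitian metric from compactness, the splitting $E\cong\underline{\mathbb C}\oplus E'$ via the orthogonal complement of the line spanned by the section, then the Whitney formula and the vanishing of $c_N$ in rank $N-1$. Your version additionally supplies the local-triviality details (completing $S$ to a local frame, Gram--Schmidt) that the paper leaves implicit.
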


Let $L\to M$ be a smooth complex line bundle over a smooth manifold $M$, and let $\nabla$ be a connection on $L$.
In a local trivialization of $L$, a connection has the form
$
\nabla=d+A,
$
where $A$ is a complex-valued one-form and its curvature is
$
F_\nabla=dA.
$
For a line bundle, this is a globally defined complex-valued two-form.
With the convention used in the upcoming sections, the real first Chern class of $L$ is
represented in de Rham cohomology by
$$
c_1(L)_{\mathbb R}
=
\left[\frac{i}{2\pi}F_\nabla\right]
\in H^2(M;\mathbb R).
$$
In other words,
$
\frac{i}{2\pi}F_\nabla
$
is a closed real two-form whose de Rham cohomology class is the image of
$c_1(L)\in H^2(M;\mathbb Z)$ in real cohomology.

\section{Proof of Theorem \ref{thm:zak_zero}}\label{sec:zak_zero}

\subsection{Common zero of quasiperiodic functions}
We start by showing the first part of Theorem \ref{thm:zak_zero}, i.e., if $s_1,\dots,s_N$ are continuous quasiperiodic functions on $\Rd$ with $N \leq d$ then they have a common zero.

\begin{proof}[Proof of the first part of Theorem \ref{thm:zak_zero}]    

In the following, we denote an element of $\Rd \times \Rd$ by $(u,v)$. Assume by contradiction that
\begin{equation}\label{eq:star}
    (s_1(u,v),\dots,s_N(u,v))\neq 0, \quad (u,v)\in\mathbb R^d\times\mathbb R^d.
\end{equation}

\smallskip
\noindent\textbf{Step 1: the quasiperiodic line bundle.}
Let
$
\mathbb T^{2d}
:=
(\mathbb R^d/\mathbb Z^d)_u\times(\mathbb R^d/\mathbb Z^d)_v
$
and define a complex line bundle
$
\mathcal L \to \mathbb T^{2d}
$
as follows. Start with the trivial bundle
$
\mathbb R^{2d}\times \mathbb C\to \mathbb R^{2d}
$
and let $\mathbb Z^d\times\mathbb Z^d$ act on $\mathbb R^{2d}\times \mathbb C$ by
$$
(n,m)\cdot (u,v,z)
:=
\bigl(u+n,\ v+m,\ e^{2\pi i\,n\cdot v}z\bigr),
$$
where $n,m\in\mathbb Z^d$.
This is a well-defined group action since for every pair $(n,m),(n',m')\in
\mathbb Z^d\times\mathbb Z^d$, one has
$$
\begin{aligned}
(n,m)\cdot\bigl((n',m')\cdot(u,v,z)\bigr)
&=
(n,m)\cdot
\bigl(u+n',v+m',e^{2\pi i\,n'\cdot v}z\bigr)  \\
&=
\bigl(u+n+n',v+m+m',
e^{2\pi i\,n\cdot(v+m')}e^{2\pi i\,n'\cdot v}z\bigr)  \\
&=
\bigl(u+n+n',v+m+m',
e^{2\pi i\,(n+n')\cdot v}z\bigr) \\
&=
(n+n',m+m')\cdot(u,v,z).
\end{aligned}
$$
Thus, we may form the quotient bundle
$$
\mathcal L
:=
(\mathbb R^{2d}\times\mathbb C)/(\mathbb Z^d\times\mathbb Z^d)
\to
\mathbb T^{2d}.
$$
A continuous section of $\mathcal L$ is equivalently a continuous function
$
\sigma:\mathbb R^{2d}\to\mathbb C
$
satisfying
\begin{equation}\label{eq:333}
    \sigma(u+n,v+m)
=
e^{2\pi i\,n\cdot v}\sigma(u,v)
\end{equation}
for all $n,m\in\mathbb Z^d$.
Indeed, such a function defines a section by
$$
[u,v]\longmapsto [(u,v,\sigma(u,v))],
$$
and condition \eqref{eq:333} is exactly the condition that this definition is independent of
the chosen representative $(u,v)$ of the point $[u,v]\in\mathbb T^{2d}$.
By assumption, each $s_\ell$ satisfies \eqref{eq:333}. Hence, each $s_\ell$ defines a continuous
section of $\mathcal L$ and therefore
$$
S:=(s_1,\dots,s_N)
$$
defines a continuous section of the rank-$N$ complex vector bundle
$$
E_N:=\mathcal L^{\oplus N} \to \mathbb T^{2d}.
$$
Assumption \eqref{eq:star} says exactly that $S$ is a nowhere-zero section of
$E_N$.

\smallskip
\noindent\textbf{Step 2: computation of the first Chern class of $\mathcal L$.}
For every integer $j \in \{ 1,\dots,d \}$, define $\alpha_j \in H^2(\mathbb T^{2d};\mathbb R)$ via
$$
\alpha_j:=[du_j\wedge dv_j].
$$
We now prove that
$
c_1(\mathcal L)_{\mathbb R}
=
\alpha_1+\cdots+\alpha_d.
$
To do so, we start by computing the Chern class using a connection. Although the sections
$s_\ell$ are only assumed continuous, the bundle $\mathcal L$ itself is a smooth
complex line bundle, so its Chern class may be computed using any smooth connection.
Let $\sigma:\mathbb R^{2d}\to\mathbb C$ be a smooth function satisfying
$$
\sigma(u+n,v+m)=e^{2\pi i\,n\cdot v}\sigma(u,v),
$$
and define
$$
\nabla\sigma
:=
d\sigma
-
2\pi i
\left(\sum_{j=1}^d u_j\,dv_j\right)\sigma.
$$
We check that $\nabla$ descends to a connection on $\mathcal L$. Indeed,
$$
\begin{aligned}
(\nabla\sigma)(u+n,v+m)
&=
d\!\left(e^{2\pi i\,n\cdot v}\sigma(u,v)\right)
-
2\pi i\sum_{j=1}^d (u_j+n_j)\,dv_j\,
e^{2\pi i\,n\cdot v}\sigma(u,v) \\
&=
e^{2\pi i\,n\cdot v}
\left(
d\sigma
+
2\pi i\sum_{j=1}^d n_j\,dv_j\,\sigma
-
2\pi i\sum_{j=1}^d u_j\,dv_j\,\sigma
-
2\pi i\sum_{j=1}^d n_j\,dv_j\,\sigma
\right) \\
&=
e^{2\pi i\,n\cdot v}
\left(
d\sigma
-
2\pi i\sum_{j=1}^d u_j\,dv_j\,\sigma
\right) \\
&=
e^{2\pi i\,n\cdot v}(\nabla\sigma)(u,v).
\end{aligned}
$$
Thus, $\nabla$ defines a
connection on $\mathcal L$.
The connection one-form in the above trivialization over $\mathbb R^{2d}$ is
$$
A
=
-2\pi i\sum_{j=1}^d u_j\,dv_j.
$$
Since $\mathcal L$ has rank one, the curvature is
$$
F_\nabla=dA
=
-2\pi i\sum_{j=1}^d du_j\wedge dv_j.
$$
For a complex line bundle, the real first Chern class is represented by
$
\frac{i}{2\pi}F_\nabla,
$
which implies that
$$
c_1(\mathcal L)_{\mathbb R}
=
\left[\frac{i}{2\pi}F_\nabla\right]
=
\sum_{j=1}^d [du_j\wedge dv_j]
=
\alpha_1+\cdots+\alpha_d.
$$

\smallskip
\noindent\textbf{Step 3: the top Chern class of $E_N$.}
Since
$
E_N=\mathcal L^{\oplus N},
$
the Whitney product formula gives
$
c(E_N)=c(\mathcal L)^N.
$
Because $\mathcal L$ is a line bundle,
$
c(\mathcal L)=1+c_1(\mathcal L).
$
Hence,
$
c_N(E_N)=c_1(\mathcal L)^N.
$
Passing to real cohomology and using the computation from above, we get
$$
c_N(E_N)_{\mathbb R}
=
(\alpha_1+\cdots+\alpha_d)^N.
$$
We now show that this class is nonzero. To do so, consider the $2N$-dimensional subtorus $Y$ of $\T^{2d}$ defined by
$$
Y
:=
\{u_{N+1}=\cdots=u_d=v_{N+1}=\cdots=v_d=0\}
\subset \mathbb T^{2d}.
$$
We have the properties
$
Y\cong(\mathbb T^2)^N,
$
$
\alpha_j|_Y=0
$
for $j > N$, and hence
\begin{equation}\label{eq:777}
    \left.c_N(E_N)_{\mathbb R}\right|_Y
=
(\alpha_1+\cdots+\alpha_N)^N.
\end{equation}
The cohomology ring of the torus is the exterior algebra on the degree-one classes
$[du_1],\dots,[du_d],[dv_1],\dots,[dv_d]$. In particular,
$$
\alpha_j^2
=
[du_j\wedge dv_j]\smile[du_j\wedge dv_j]
=
0.
$$
Since the classes $\alpha_j$ have degree $2$, they commute with one another. Hence
the multinomial expansion of \eqref{eq:777} gives
\begin{equation}\label{eq:888}
    (\alpha_1+\cdots+\alpha_N)^N
=
N!\,\alpha_1\smile\cdots\smile\alpha_N.
\end{equation}
Equivalently,
$$
(\alpha_1+\cdots+\alpha_N)^N
=
N!\,[du_1\wedge dv_1\wedge\cdots\wedge du_N\wedge dv_N].
$$
Giving $Y$ its standard orientation, we obtain the identity
$$
\int_Y
du_1\wedge dv_1\wedge\cdots\wedge du_N\wedge dv_N
=
1.
$$
Thus, by \eqref{eq:777} and \eqref{eq:888} we obtain
$$
\int_Y
\left.c_N(E_N)_{\mathbb R}\right|_Y
=
N!\neq 0.
$$
The latter identity implies that
$
c_N(E_N)_{\mathbb R}\neq 0.
$
In particular, we have shown that
\begin{equation}\label{eq:999}
    c_N(E_N)\neq 0
\end{equation}
in $H^{2N}(\mathbb T^{2d};\mathbb Z)$.

\smallskip
\noindent\textbf{Step 4: contradiction from a nowhere-zero section.}
By Step 1, the functions $s_1,\dots,s_N$ define a continuous section
$
S=(s_1,\dots,s_N)
$
of
$
E_N=\mathcal L^{\oplus N}.
$
By the contradiction assumption, $S$ is nowhere zero.
Since $\mathbb T^{2d}$ is a compact Hausdorff space, every complex vector bundle over
$\mathbb T^{2d}$ admits a continuous Hermitian metric. The nowhere-zero section $S$
spans a trivial complex line subbundle
$
\mathbb C S\subset E_N.
$
Taking its orthogonal complement with respect to a continuous Hermitian metric gives
a rank-$(N-1)$ complex vector bundle $E'$ such that
$
E_N\cong \underline{\mathbb C}\oplus E'.
$
Therefore, by the Whitney product formula,
$$
c(E_N)
=
c(\underline{\mathbb C})c(E')
=
c(E').
$$
Since $E'$ has complex rank $N-1$, its degree-$2N$ Chern class vanishes,
$
c_N(E')=0.
$
Hence, we have
$
c_N(E_N)=0,
$
contradicting \eqref{eq:999}.
\end{proof}

\subsection{Quasiperiodic functions with no common zero}
Next, we prove the second part of the statement of Theorem \ref{thm:zak_zero} by an explicit construction, i.e., if $N>d$ we construct smooth quasiperiodic functions
$
s_1,\dots,s_N:
\mathbb R^d_u\times\mathbb R^d_v\to\mathbb C
$
such that $s_1,\dots,s_N$ have no common zero. 

In order to construct such functions we let $\eta\in C^\infty(\mathbb R)$ be given by
$$
\eta(t)
=
\begin{cases}
e^{-1/t}, & t>0,\\
0, & t\le 0,
\end{cases}
$$
and define $\varphi,\psi \in C_c^\infty(\R)$ via
$$
\varphi(t)
=
\eta\!\left(t+\tfrac12\right)
\eta\!\left(\tfrac12-t\right), \quad \psi(t)
=
\eta(t)\eta(1-t).
$$
Then for every $t \in (-\frac12,\frac12)$ we have $\varphi(t)>0$ and for every $y \leq -\frac12$ or $y>\frac12$ we have $\varphi(y)=0$.
Similarly, $\psi(t)>0$ for $0<t<1$ and $\psi(y)=0$ for $y \leq 0$ or $y \geq 1$.

Let $a,b$ be the Zak transform of $\varphi$ and $\psi$, respectively,
$$
a(u,v)
=
\sum_{k\in\mathbb Z}
\varphi(u-k)e^{2\pi i k v}, \quad b(u,v)
=
\sum_{k\in\mathbb Z}
\psi(u-k)e^{2\pi i k v}.
$$
Since $\varphi$ and $\psi$ have compact
support, the above sums are locally finite.
Therefore, $a$ and $b$ are smooth quasiperiodic functions.

\begin{lemma}
The functions $a$ and $b$ have no common zero.
\end{lemma}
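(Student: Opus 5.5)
By quasiperiodicity it suffices to check that $a$ and $b$ have no common zero for $u$ in a fundamental interval. The natural choice is $u\in[-\tfrac12,\tfrac12)$, and $v$ may be arbitrary in $\mathbb R$. On this range the locally finite sums collapse to very few terms, so we can read off where $a$ and $b$ vanish. Indeed, since $a(u+n,v)=e^{2\pi i n v}a(u,v)$, and likewise for $b$, the zero sets of $a$ and $b$ are invariant under $u\mapsto u+n$. It is therefore enough to treat $u$ in one period.

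\textbf{Step 1: $a$ on $(-\tfrac12,\tfrac12)$.} For $u\in(-\tfrac12,\tfrac12)$ and $k\neq 0$ we have $|u-k|>\tfrac12$, so $\varphi(u-k)=0$. Hence $a(u,v)=\varphi(u)>0$, and $a$ has no zeros with $u\in(-\tfrac12,\tfrac12)$. By quasiperiodicity, the only possible zeros of $a$ lie on the lines $u\in\tfrac12+\mathbb Z$.

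\textbf{Step 2: $b$ at $u=\tfrac12$.} Here $\psi(\tfrac12-k)$ is nonzero only when $0<\tfrac12-k<1$, which forces $k=0$. So $b(\tfrac12,v)=\psi(\tfrac12)>0$ for every $v$. Quasiperiodicity then gives $|b(\tfrac12+n,v)|=\psi(\tfrac12)>0$ for all $n\in\mathbb Z$. Combining the two steps, at any point $(u,v)$ either $u\notin\tfrac12+\mathbb Z$, in which case $a(u,v)\neq0$, or $u\in\tfrac12+\mathbb Z$, in which case $b(u,v)\neq0$. Thus $a$ and $b$ have no common zero.

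There is no real obstacle. The only point requiring care is the support bookkeeping: $\varphi$ is positive on the open interval $(-\tfrac12,\tfrac12)$ and vanishes at $\pm\tfrac12$, while $\psi$ is positive at $\tfrac12$. This is exactly why the roles of $a$ and $b$ split cleanly along the lines $u\in\tfrac12+\mathbb Z$.
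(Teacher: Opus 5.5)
Your proof is correct and uses the same support-bookkeeping argument as the paper: on the relevant set of $u$ each locally finite Zak sum collapses to a single nonzero term, and quasiperiodicity (or, in the paper, a direct choice of $k$) extends this to all of $\mathbb R$. The only difference is that your case split is mirrored. The paper uses $a(u,v)=\varphi(0)e^{2\pi i uv}\neq 0$ for $u\in\mathbb Z$ and $b\neq 0$ for $u\notin\mathbb Z$, whereas you use $a\neq 0$ for $u\notin\tfrac12+\mathbb Z$ and $b\neq 0$ for $u\in\tfrac12+\mathbb Z$.
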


\begin{proof}
Fix $u\in\mathbb R$.
If $u\in\mathbb Z$, then the term $k=u$ appears in the sum defining $a$,
and
$
\varphi(u-k)=\varphi(0)>0.
$
Moreover, for $k\neq u$, the value $u-k$ is a nonzero integer, so
$
\varphi(u-k)=0.
$
Hence,
$$
a(u,v)=\varphi(0)e^{2\pi i u v}\neq 0.
$$
If $u\notin\mathbb Z$, then there is a unique integer $k$ such that
$
0<u-k<1.
$
For this $k$, we have
$
\psi(u-k)>0.
$
For every other integer $r\neq k$, we have
$
u-r\notin (0,1),
$
so
$
\psi(u-r)=0.
$
Therefore,
$$
b(u,v)=\psi(u-k)e^{2\pi i k v}\neq 0.
$$
Thus, for every $(u,v)\in\mathbb R^2$, at least one of $a(u,v)$ and
$b(u,v)$ is nonzero. Hence, $a$ and $b$ have no common zero.
\end{proof}

Next, we pass from dimension one to dimension $d$. To do so, define for every $j=1,\dots,d$ smooth functions $a_j,b_j:
\mathbb R^d_u\times\mathbb R^d_v\to\mathbb C$ by
$$
a_j(u,v)
=
a(u_j,v_j), \quad b_j(u,v)
=
b(u_j,v_j),
$$
where $u=(u_1,\dots,u_d)$ and $v=(v_1,\dots,v_d)$.
From the one-dimensional quasiperiodicity of $a$ and $b$, we get
\begin{equation}\label{eq:a_b}
    a_j(u+n,v+m)
=
e^{2\pi i n_j v_j}a_j(u,v), \quad b_j(u+n,v+m)
=
e^{2\pi i n_j v_j}b_j(u,v)
\end{equation}
for every $n,m\in\mathbb Z^d$.
Now introduce an auxiliary variable $T \in \C$, and define
$$
P_{u,v}(T)
=
\prod_{j=1}^d
\bigl(a_j(u,v)+T b_j(u,v)\bigr).
$$
Write this polynomial as
$$
P_{u,v}(T)
=
S_0(u,v)+S_1(u,v)T+\cdots+S_d(u,v)T^d.
$$
The coefficients $S_k(u,v)$ satisfy
$$
S_k(u,v)
=
\sum_{\substack{I\subset\{1,\dots,d\}\\ |I|=k}}
\left(
\prod_{j\in I} b_j(u,v)
\right)
\left(
\prod_{j\notin I} a_j(u,v)
\right),
$$
which implies that $S_0,\dots,S_d$
are smooth functions on
$
\mathbb R^d_u\times\mathbb R^d_v.
$

\begin{lemma}
For every $k=0,\dots,d$, the functions $S_k$ are quasiperiodic.
\end{lemma}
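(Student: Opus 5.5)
We use the explicit formula for $S_k$ and the componentwise quasiperiodicity relations \eqref{eq:a_b}. The point is that each summand of $S_k$ contains exactly one factor for each coordinate $j\in\{1,\dots,d\}$. That factor is $b_j$ if $j\in I$ and $a_j$ if $j\notin I$. Both $a_j$ and $b_j$ pick up the same multiplier $e^{2\pi i n_j v_j}$ under the shift $(u,v)\mapsto(u+n,v+m)$.

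Concretely, fix $k$, a subset $I\subset\{1,\dots,d\}$ with $|I|=k$, and $n,m\in\mathbb Z^d$. We evaluate the summand
$$\Bigl(\prod_{j\in I} b_j\Bigr)\Bigl(\prod_{j\notin I} a_j\Bigr)$$
at $(u+n,v+m)$ and apply \eqref{eq:a_b} to each factor. This gives the original summand at $(u,v)$ multiplied by
$$\prod_{j\in I}e^{2\pi i n_j v_j}\prod_{j\notin I}e^{2\pi i n_j v_j}=\prod_{j=1}^d e^{2\pi i n_j v_j}=e^{2\pi i\,n\cdot v}.$$
Since this multiplier does not depend on $I$, summing over all $I$ with $|I|=k$ yields
$$S_k(u+n,v+m)=e^{2\pi i\,n\cdot v}S_k(u,v).$$
This is exactly \eqref{eq:quasi_periodicity}.

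An alternative route gives all $k$ at once. Apply \eqref{eq:a_b} inside the product defining $P_{u,v}(T)$ to get
$$P_{u+n,v+m}(T)=e^{2\pi i\,n\cdot v}P_{u,v}(T)$$
as polynomials in $T$, and then compare coefficients.

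There is no real obstacle. The only point needing care is that \eqref{eq:a_b} itself follows from one-dimensional quasiperiodicity: $a_j$ depends only on $(u_j,v_j)$, so a shift by $(n,m)$ changes only the $j$-th arguments, giving $a(u_j+n_j,v_j+m_j)=e^{2\pi i n_j v_j}a(u_j,v_j)$, and likewise for $b_j$. Smoothness of $S_k$ was already noted, so the lemma follows.
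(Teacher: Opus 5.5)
Your proof is correct and essentially the paper's argument. The paper takes exactly your alternative route: it applies \eqref{eq:a_b} inside the product to get $P_{u+n,v+m}(T)=e^{2\pi i\,n\cdot v}P_{u,v}(T)$ and then compares coefficients of $T^k$; your summand-by-summand computation is the same identity written out through the explicit formula for $S_k$.
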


\begin{proof}
Using equation \eqref{eq:a_b} we obtain
$$
\begin{aligned}
P_{u+n,v+m}(T)
&=
\prod_{j=1}^d
\bigl(a_j(u+n,v+m)+T b_j(u+n,v+m)\bigr) \\
&=
\prod_{j=1}^d
e^{2\pi i n_j v_j}
\bigl(a_j(u,v)+T b_j(u,v)\bigr) \\
&=
e^{2\pi i n\cdot v}P_{u,v}(T).
\end{aligned}
$$
Comparing coefficients of $T^k$ yields the statement.
\end{proof}

\begin{proof}[Proof of the second part of Theorem \ref{thm:zak_zero}]

By the preceding discussion, the functions $S_0, \dots, S_d$ are smooth and quasiperiodic. It suffices to show that $
S_0,\dots,S_d
$
have no common zero. To do so, fix
$
(u,v)\in\mathbb R^d\times\mathbb R^d
$
and use that for each $j=1,\dots,d$, the pair
$
(a_j(u,v),b_j(u,v))
$
is not equal to $(0,0)$, because $a$ and $b$ have no common zero in
dimension one.
Hence, the linear polynomial
$$
a_j(u,v)+T b_j(u,v)
$$
is not the zero polynomial in $\mathbb C[T]$. Since $\mathbb C[T]$ has no
zero divisors, the product
$$
P_{u,v}(T)
=
\prod_{j=1}^d
\bigl(a_j(u,v)+T b_j(u,v)\bigr)
$$
is also not the zero polynomial.
Therefore, not all coefficients of $P_{u,v}(T)$ vanish. That is, at least one
of
$$
S_0(u,v),S_1(u,v),\dots,S_d(u,v)
$$
is nonzero. Since $(u,v)$ was arbitrary, $S_0,\dots,S_d$ have no common zero. Since $N \geq d+1$, the $N$ quasiperiodic functions $s_1, \dots, s_N$ defined by
\begin{equation}
    s_j = \begin{cases}
        S_{j-1}, & j = 1,\dots, d+1, \\
        0, & j > d+1,
    \end{cases}
\end{equation}
have no common zero, thereby proving the statement.
\end{proof}

\section{Proof of Theorem \ref{thm:main}}

\subsection{Reduction to the canonical lattice}\label{sec:reduction}
For a sublattice $\Gamma'$ of a lattice $\Gamma$, we denote by $[\Gamma:\Gamma']$ the index of $\Gamma'$. In order to prove Theorem \ref{thm:main}, we first reduce to the case $L=\Z^d$.

Suppose that $L=A\Z^d$ with $A \in \mathrm{GL}_d(\Q)$.
Further, let $L^*=A^{-T}\mathbb Z^d$ be the dual lattice of $L$. Define the dilation operator $D_A : \ltd \to \ltd$ via
$
D_A f(t)=|\det A|^{1/2}f(At).
$
Note that $D_A$ is unitary and satisfies
$$
D_A\pi(x,\omega)D_A^{-1}
=
\pi(A^{-1}x,A^T\omega).
$$
Therefore, the Gabor system $\bigcup_{\ell=1}^r \mathbf G(g_\ell,\Lambda)$ is unitarily equivalent to the Gabor system $\bigcup_{\ell=1}^r \mathbf G(D_A g_\ell,\Lambda')$ where
$$
\Lambda'
:=
\{Q\lambda:\lambda\in\Lambda\}, \quad Q = \mathrm{diag}(A^{-1},A^T).
$$
The image of $L\times L^*$ under $Q$ is $\Z^d\times\mathbb Z^d$ and we have the index identity
$
[\Lambda:L\times L^*]=[\Lambda':\mathbb Z^d\times\mathbb Z^d]=N.
$
Moreover, if $f \in C_Z(\Rd)$ then the property that $A$ is an element of $\mathrm{GL}_d(\Q)$ implies that $D_Af \in C_Z(\Rd)$.

Summarizing, it suffices to prove the theorem for
$
L=\mathbb Z^d.
$
From now on, set
$$
H:=\mathbb Z^d\times\mathbb Z^d
$$
and assume that $\Lambda \subseteq \R^{2d}$ is a lattice such that $H\subseteq\Lambda$ is a sublattice of index
$
[\Lambda:H]=N.
$

\subsection{}
We start by proving the necessity of the condition $rN > d$, that is, if $\Lambda$ is given as in Theorem \ref{thm:main}, then the existence of $g_1,\dots,g_r\in C_Z(\mathbb R^d)$ giving a frame of the form
$
\bigcup_{\ell=1}^r \mathbf G(g_\ell,\Lambda)
$
implies that $rN > d$.

\begin{proof}[Proof of the forward direction of Theorem \ref{thm:main}]
Choose representatives
$
\gamma_1,\dots,\gamma_N\in\Lambda
$
for the finite quotient group
$
\Lambda/H
$
so that $\Lambda$ is given as a disjoint union of the form
$$
\Lambda
=
\bigcup_{j=1}^N(\gamma_j+H).
$$
Write
$
\gamma_j=(a_j,b_j)
$
and
define for $g_1,\dots,g_r \in C_Z(\Rd) $ functions $h_{\ell,j}$ by 
$$
h_{\ell,j}:=\pi(\gamma_j)g_\ell,
\quad
1\le \ell\le r,\quad 1\le j\le N.
$$
The system
$
\bigcup_{\ell=1}^r\mathbf G(g_\ell,\Lambda)
$
coincides, up to unimodular scalar factors, with the system
$$
\bigcup_{\ell=1}^r\bigcup_{j=1}^N\mathbf G(h_{\ell,j},H).
$$
Therefore, $\bigcup_{\ell=1}^r\mathbf G(g_\ell,\Lambda)$ is a frame if and only if $\bigcup_{\ell=1}^r\bigcup_{j=1}^N\mathbf G(h_{\ell,j},H)$ is a frame.
By Lemma \ref{lma:Zak_frame}, $\bigcup_{\ell=1}^r\bigcup_{j=1}^N\mathbf G(h_{\ell,j},H)$ is a frame if and only if
$$
m(u,v)
:=
\sum_{\ell=1}^r\sum_{j=1}^N
|Z h_{\ell,j}(u,v)|^2
$$
satisfies $0<A \leq m(u,v) \leq B < \infty$ for almost every $(u,v)$.
Moreover, the relation \eqref{eq:Zak_TS_property} implies that
$$
Z h_{\ell,j}(u,v)
=
Z(\pi(a_j,b_j)g_\ell)(u,v)
=
e^{2\pi i\,b_j\cdot u}Zg_\ell(u-a_j,v-b_j).
$$

Assume, towards a contradiction, that
$
rN\le d
$
and that there exists functions
$
g_1,\dots,g_r\in C_Z(\mathbb R^d)
$
such that
$
\bigcup_{\ell=1}^r\mathbf G(g_\ell,\Lambda)
$
is a frame. For $1\le\ell\le r$ and $1\le j\le N$, set
$$
s_{\ell,j}(u,v):=Zh_{\ell,j}(u,v).
$$
Because $g_\ell\in C_Z(\mathbb R^d)$, it follows that every
$s_{\ell,j}$ is continuous and quasiperiodic. By Theorem \ref{thm:zak_zero}, there exists
$
(u_0,v_0)\in\mathbb R^d\times\mathbb R^d
$
such that
$
s_{\ell,j}(u_0,v_0)=0
$
for every $\ell=1,\dots,r$ and every $j=1,\dots,N$. Hence,
$
m(u_0,v_0)=0.
$
If $\bigcup_{\ell=1}^r\mathbf G(g_\ell,\Lambda)$ were a frame, then by $m \geq A >0$ almost everywhere and the continuity of each $s_{\ell,j}$ we would have $m \geq A >0$ everywhere, contradicting the property that $m$ has a zero.
\end{proof}

\subsection{Construction idea}\label{sec:geom_idea}
To complete the proof of Theorem \ref{thm:main}, we next show via a construction that if $rN > d$ then there exists $g_1,\dots,g_r\in \mathcal S(\mathbb R^d)$ such that
$
\bigcup_{\ell=1}^r \mathbf G(g_\ell,\Lambda)
$
is a Parseval frame for $L^2(\mathbb R^d)$.

The construction is best understood geometrically on the Zak transform side. Consider the torus $\mathbb R^{2d}/H$, where
$H=\mathbb Z^d\times \mathbb Z^d$.  The representatives
$\gamma_1,\dots,\gamma_N$ of $\Lambda/H$ have a separation constant $\delta$. Choose $\varepsilon>0$ smaller than this separation constant and cover a compact
fundamental domain $P_\Lambda$ of $\Lambda$ by balls $B(c_\alpha,\varepsilon/2)$.  For each
center $c_\alpha$ and each representative $\gamma_j$, we place a small bump at $c_\alpha-\gamma_j$ and quasiperiodize it using the \emph{quasiperiodization operator}
$$
\mathcal P\rho(u,v):=\sum_{n,m\in\mathbb Z^d}e^{2\pi i\,n\cdot v}\,\rho(u-n,v-m).
$$
This yields functions $\varphi_{\alpha,j}$.  The choice of $\varepsilon$ ensures that, whenever
$x\in B(c_\alpha,\varepsilon/2)$, the matrix
$$
    \bigl(\varphi_{\alpha,j}(x-\gamma_i)\bigr)_{1\leq i,j\leq N}
$$
is diagonal with strictly positive diagonal entries and therefore has rank $N$ at every point $x$.
After relabeling the functions as $\varphi_1,\dots,\varphi_K$, the maps
$$
A(x):\mathbb C^K\to \mathbb C^N, \quad A(x)c = \left( \sum_{k=1}^K c_k\varphi_k(x-\gamma_j) \right)_{j=1}^N
$$
are therefore surjective for all $x$.  We then choose linear
combinations
$$
F_\ell=\sum_{k=1}^K c_{\ell,k}\varphi_k, \quad \ell=1,\dots,r,
$$
with coefficients $c_{\ell,k}$. The goal is to choose the coefficients in such a way that we rule out the existence of $x$ such that $F_\ell(x-\gamma_j)=0$ for every $\ell$ and every $j$. Call this the bad set of coefficients. Since
$A(x)$ is surjective, we show that this bad set has codimension $rN$ in the space of coefficient
variables, while $x$ ranges over $d$ complex dimensions. The hypothesis $rN>d$ makes the resulting bad set measure zero by Sard's theorem.  Hence, the coefficients may be chosen so that
$$
M(x) := \sum_{\ell=1}^r\sum_{j=1}^N |F_\ell(x-\gamma_j)|^2
$$
is strictly positive for every $x$.  The function $M$ is $\Lambda$-periodic, so
normalizing
$$
G_\ell(x)=\frac{F_\ell(x)}{\sqrt{M(x)}}
$$
preserves smoothness and quasiperiodicity and gives
$$
\sum_{\ell=1}^r\sum_{j=1}^N |G_\ell(x-\gamma_j)|^2 = 1.
$$
In the following we carry out this idea in a mathematically rigorous way.

\subsection{Construction of a Gabor Parseval frame}\label{sec:construction}
Let $L=\mathbb Z^d$, $H=\mathbb Z^d\times\mathbb Z^d$, and assume that $H\subseteq \Lambda$ has index $[\Lambda:H]=N$.
Choose representatives
$
\gamma_1,\dots,\gamma_N\in\Lambda
$
for the quotient $\Lambda/H$, and write
$$
\gamma_j=(a_j,b_j)\in\mathbb R^d\times\mathbb R^d.
$$

Our goal is to construct smooth quasiperiodic functions
$
G_1,\dots,G_r
$
such that
for every $(u,v)\in\mathbb R^{2d}$ one has
\begin{equation}\label{eq:zak-partition-identity}
\sum_{\ell=1}^r\sum_{j=1}^N
|G_\ell(u-a_j,v-b_j)|^2
=
1.
\end{equation}
In the following we write $x=(u,v)\in\mathbb R^{2d}$. For
$\rho\in C_c^\infty(\mathbb R^{2d})$, we define a quasiperiodized bump-function via
\begin{equation}\label{eq:quasi-periodization}
    \mathcal P\rho(u,v)
:=
\sum_{n,m\in\mathbb Z^d}
e^{2\pi i\,n\cdot v}\rho(u-n,v-m).
\end{equation}
The above sum is locally finite, hence $\mathcal P\rho$ is smooth. It follows from a direct calculation that $\mathcal P\rho$ is quasiperiodic.

\subsection{}
We next construct finitely many smooth quasiperiodic functions
$
\varphi_1,\dots,\varphi_K
$
such that, for every $x\in\mathbb R^{2d}$, the matrix
\begin{equation}\label{eq:evaluation-matrix}
    \bigl(\varphi_k(x-\gamma_j)\bigr)_
{1\le j\le N,\;1\le k\le K}
\end{equation}
has rank $N$.
To do so, let
$$
\operatorname{dist}_H(x,y)
:=
\inf_{h\in H}|x-y-h|
$$
be the distance on the quotient $\mathbb R^{2d}/H$ and define the quantity $\delta>0$ via
$$
\delta :=
\begin{cases}
1, & N=1,\\
\min_{i\neq j}\operatorname{dist}_H(\gamma_i,\gamma_j), & N\geq 2.
\end{cases}
$$
Choose
$$
0<\varepsilon<\frac14\min\{1,\delta\}
$$
and $\chi\in C_c^\infty(\mathbb R^{2d})$ with $\chi\geq 0$, such that
$$
\operatorname{supp}\chi\subset B(0,\varepsilon),
\qquad
\chi(x)>0
\quad\text{for } |x|<\frac{\varepsilon}{2}.
$$
Let $P_\Lambda$ be a compact fundamental parallelepiped for $\Lambda$.
Choose finitely many points $c_1,\dots,c_A\in\mathbb R^{2d}$ such that
$$
P_\Lambda
\subset
\bigcup_{\alpha=1}^A
B\left(c_\alpha,\frac{\varepsilon}{2}\right).
$$
For $1\leq\alpha\leq A$ and $1\leq j\leq N$, define
$$
\rho_{\alpha,j}(x)
:=
\chi\bigl(x-(c_\alpha-\gamma_j)\bigr),
\qquad
\varphi_{\alpha,j}:=\mathcal P\rho_{\alpha,j}.
$$
Clearly, each $\varphi_{\alpha,j}$ is quasiperiodic.

\begin{lemma}
For every $x\in\mathbb R^{2d}$, the matrix
$$
\left(
\varphi_{\alpha,j}(x-\gamma_i)
\right)_{
1\leq i\leq N,\;
(\alpha,j)\in\{1,\dots,A\}\times\{1,\dots,N\}
}
$$
has rank $N$.
\end{lemma}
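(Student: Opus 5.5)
Fix $x\in\mathbb R^{2d}$. Since $\Lambda=\bigcup_j(\gamma_j+H)$ and $P_\Lambda$ is a fundamental domain for $\Lambda$, we can write $x=y+\lambda$ with $y\in P_\Lambda$ and $\lambda\in\Lambda$. Write $\lambda=\gamma_{\tau}+h_0$ with $h_0\in H$ and some index $\tau$. Choose $\alpha$ with $|y-c_\alpha|<\varepsilon/2$. The plan is to exhibit an $N\times N$ submatrix, indexed by the columns $(\alpha,j)$ with $j=1,\dots,N$ (possibly permuted), that is a monomial matrix with nonzero entries. Such a matrix is invertible, so the full matrix has rank $N$.

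The first step is to reduce to a single representative point. Quasiperiodicity gives $|\varphi(z+h)|=|\varphi(z)|$ for $h\in H$, so the vanishing pattern of each entry depends only on $x-\gamma_i$ modulo $H$. Moreover, translation by $\gamma_\tau$ permutes the cosets $\gamma_i+H$. So it is cleaner to index things as follows. Since $\gamma_\tau+\gamma_i$ lies in some coset $\gamma_{\sigma(i)}+H$, the map $\sigma$ is a permutation of $\{1,\dots,N\}$. We then have $x-\gamma_i\equiv y+\gamma_\tau-\gamma_i$ modulo $H$.

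The second step is a support computation. Because $\operatorname{supp}\chi\subset B(0,\varepsilon)$ with $\varepsilon<1/4$, the balls $B(c_\alpha-\gamma_j-h,\varepsilon)$ for $h\in H$ are pairwise disjoint. Hence at any point $z$ at most one term of $\mathcal P\rho_{\alpha,j}(z)$ is nonzero, and
\[
|\varphi_{\alpha,j}(z)|=\chi\bigl(z-(c_\alpha-\gamma_j)-h\bigr)
\]
for the unique relevant $h\in H$, or $\varphi_{\alpha,j}(z)=0$ if no such $h$ exists. Now take $z=x-\gamma_i$. Then $\varphi_{\alpha,j}(x-\gamma_i)\neq0$ requires
\[
\operatorname{dist}_H\bigl(y+\lambda-\gamma_i,\ c_\alpha-\gamma_j\bigr)<\varepsilon .
\]
Since $|y-c_\alpha|<\varepsilon/2$, this forces $\operatorname{dist}_H(\lambda-\gamma_i+\gamma_j,0)<3\varepsilon/2<\delta$. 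Two cases arise.

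\begin{itemize}
\item If $N\ge2$: $\lambda-\gamma_i+\gamma_j$ lies in $\Lambda$, and distinct $\Lambda/H$ classes are at $H$-distance at least $\delta$. This needs care: $\delta$ is defined via the chosen representatives, so one should check that the $H$-distance between classes equals $\operatorname{dist}_H(\gamma_i,\gamma_j)$, which holds by definition of $\operatorname{dist}_H$. It follows that $\lambda-\gamma_i+\gamma_j\in H$, i.e.\ $\gamma_j\equiv\gamma_i-\gamma_\tau$ modulo $H$. This determines $j=\pi(i)$ for a permutation $\pi$.
\item If $N=1$: the same conclusion holds trivially, since every element of $\Lambda=H$ is in $H$.
\end{itemize}

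Conversely, when $j=\pi(i)$ the relevant $h$ exists and the argument of $\chi$ is $y-c_\alpha$ modulo translation, with modulus less than $\varepsilon/2$. So the entry has modulus $\chi(y-c_\alpha)>0$. The submatrix $\bigl(\varphi_{\alpha,j}(x-\gamma_i)\bigr)_{i,j}$ therefore has exactly one nonzero entry in each row and each column, placed at $(i,\pi(i))$. It is a permuted diagonal matrix with nonzero entries, hence invertible, and the rank is $N$.

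The main obstacle is the bookkeeping in the separation step. We must check that distinct cosets of $\Lambda/H$ are separated by at least $\delta$ in the metric $\operatorname{dist}_H$, for arbitrary elements of the cosets and not just the chosen $\gamma_i$. This holds because $\operatorname{dist}_H$ is already an infimum over $H$. We also need $\operatorname{dist}_H(\mu,0)\ge\delta$ for every $\mu\in\Lambda\setminus H$, which follows from $\operatorname{dist}_H(\gamma_i,\gamma_j)=\operatorname{dist}_H(\gamma_i-\gamma_j,0)$ and the fact that every nonzero class difference is some $\gamma_i-\gamma_j$ modulo $H$. With the margin $3\varepsilon/2<\delta$ (using $\varepsilon<\delta/4$), everything closes.
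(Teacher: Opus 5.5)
Your proof is correct and is essentially the paper's argument: you use the same submatrix with columns $(\alpha,1),\dots,(\alpha,N)$, the same support estimate with margin $3\varepsilon/2<\delta$, and the same permutation of the cosets of $\Lambda/H$. The difference is organizational. You absorb the translation by $\lambda$ into the support computation and get a permuted diagonal matrix at once, which is why you need $\operatorname{dist}_H(\mu,0)\ge\delta$ for all $\mu\in\Lambda\setminus H$. The paper instead first obtains a diagonal matrix for $x\in P_\Lambda$ and then passes to general $x$ by quasiperiodicity, as a row permutation with nonzero row scalings. A minor point: your auxiliary permutation $\sigma$, built from $\gamma_\tau+\gamma_i$, is never used; the relevant one is $\pi$, built from $\gamma_i-\gamma_\tau$.
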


\begin{proof}
Let $x\in P_\Lambda$ and choose $\alpha$ such that
$$
|x-c_\alpha|<\frac{\varepsilon}{2}.
$$
We will show that the $N\times N$ submatrix
$
\bigl(\varphi_{\alpha,j}(x-\gamma_i)\bigr)_{1\leq i,j\leq N}
$
is diagonal with nonzero diagonal entries.

Write $h=(n_h,m_h)\in H$. A term in
$\varphi_{\alpha,j}(x-\gamma_i)$ can be nonzero only if
$$
x-\gamma_i-h
\in
B(c_\alpha-\gamma_j,\varepsilon),
$$
equivalently,
\begin{equation}\label{eq:bump-overlap-condition}
    \bigl|x-c_\alpha-(\gamma_i-\gamma_j+h)\bigr|<\varepsilon.
\end{equation}
If $i\neq j$, then \eqref{eq:bump-overlap-condition} implies
$$
\operatorname{dist}_H(\gamma_i,\gamma_j)
\leq
|\gamma_i-\gamma_j+h|
<
|x-c_\alpha|+\varepsilon
<
\frac{3\varepsilon}{2}
<
\delta,
$$
a contradiction. Hence, for every $i \neq j$ we have
$
\varphi_{\alpha,j}(x-\gamma_i)=0.
$
If $i=j$, then the term $h=0$ gives
$$
\rho_{\alpha,i}(x-\gamma_i)
=
\chi(x-c_\alpha)>0.
$$
No term with $h\neq 0$ can contribute, because then
$$
|h|
<
|x-c_\alpha|+\varepsilon
<
\frac{3\varepsilon}{2}
<
1,
$$
which is impossible for a nonzero element of $H=\mathbb Z^{2d}$. Therefore
$$
\varphi_{\alpha,i}(x-\gamma_i)=\chi(x-c_\alpha)>0.
$$
Thus, the chosen $N\times N$ submatrix is diagonal with nonzero diagonal
entries, and so has rank $N$.

Now let $x\in\mathbb R^{2d}$. Write $x=y+\lambda$, with
$y\in P_\Lambda$ and $\lambda\in\Lambda$. Since
$\gamma_1,\dots,\gamma_N$ represent $\Lambda/H$, there is a permutation
$\sigma$ of $\{1,\dots,N\}$ and elements $h_i\in H$ such that
$$
\lambda-\gamma_i=-\gamma_{\sigma(i)}+h_i.
$$
Thus,
$$
x-\gamma_i
=
y-\gamma_{\sigma(i)}+h_i.
$$
Write $y=(u,v)$, $\gamma_{\sigma(i)}=(a_{\sigma(i)},b_{\sigma(i)})$, and
$h_i=(n_i,m_i)$. By quasiperiodicity, for every column index
$(\alpha,j)$,
$$
\varphi_{\alpha,j}(y-\gamma_{\sigma(i)}+h_i)
=
e^{2\pi i\,n_i\cdot (v-b_{\sigma(i)})}
\varphi_{\alpha,j}(y-\gamma_{\sigma(i)}).
$$
The scalar is nonzero and independent of the column index. Hence, the matrix at
$x$ is obtained from the matrix at $y$ by a row permutation and multiplication
of rows by nonzero scalars. Therefore, the rank is unchanged. Since the rank at
$y$ is $N$, the rank at $x$ is also $N$.
\end{proof}

Next, we relabel the finitely many functions $\varphi_{\alpha,j}$ as
$
\varphi_1,\dots,\varphi_K.
$
Thus, for every $x\in\mathbb R^{2d}$, the linear map
\begin{equation}\label{eq:Ax-definition}
    A(x):\mathbb C^K\to\mathbb C^N,
\quad
A(x)c
=
\left(
\sum_{k=1}^K c_k\varphi_k(x-\gamma_j)
\right)_{j=1}^N
\end{equation}
is surjective.
For an $N$-element subset $I=\{k_1,\dots,k_N\}\subseteq\{1,\dots,K\}$,
with $k_1<\cdots<k_N$, define
$$
A_I(x)
:=
\bigl(\varphi_{k_s}(x-\gamma_j)\bigr)_
{1\leq j\leq N,\;1\leq s\leq N}
\in \C^{N \times N}.
$$
$A_I(x)$ is the $N\times N$ submatrix of $A(x)$ formed by the
columns indexed by $I$.
If $I^c=\{k_{N+1},\dots,k_K\}$, listed increasingly, we define
$$
A_{I^c}(x)
:=
\bigl(\varphi_{k_s}(x-\gamma_j)\bigr)_
{1\leq j\leq N,\;N+1\leq s\leq K}
\in \mathbb C^{N \times (K-N)}.
$$
For $c\in\mathbb C^K$, we write $c_I\in\mathbb C^N$ and
$c_{I^c}\in\mathbb C^{K-N}$ for the corresponding coordinate subvectors.
Note that
\begin{equation}\label{eq:star_star}
    A(x)c=A_I(x)c_I+A_{I^c}(x)c_{I^c}.
\end{equation}
Below, we will invoke the following consequence of Sard's theorem \cite[Corollary 6.11]{Lee}.

\begin{lemma}\label{lma:sard}
Let $U\subseteq\mathbb R^p$ be $\sigma$-compact, and let
$F:U\to\mathbb R^q$ be the restriction of a smooth map defined on an open
neighborhood of $U$. If $p<q$, then $F(U)$ has Lebesgue measure zero in
$\mathbb R^q$.
\end{lemma}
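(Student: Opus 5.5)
This is the measure-zero corollary of Sard's theorem (Lemma \ref{lma:sard}), which the paper cites from \cite[Corollary 6.11]{Lee}. I would prove it directly by reducing to compact pieces and covering arguments.

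\emph{Reduction to compact sets.} Write $U=\bigcup_{k} K_k$ with each $K_k$ compact. A countable union of null sets is null, so it suffices to show $F(K)$ has measure zero for a compact $K\subseteq U$. Let $\tilde F:W\to\mathbb R^q$ be the smooth extension to an open neighborhood $W\supseteq U$. Choose $r>0$ so that the closed $r$-neighborhood $K_r$ of $K$ lies in $W$. Then $\tilde F$ is Lipschitz on every ball of radius $r$ centred in $K$, with a uniform constant $C$ given by a bound for $\|D\tilde F\|$ on the compact set $K_r$ (apply the mean value inequality on convex balls).

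\emph{Covering estimate.} Cover $K$ by cubes of side $\eta<r/\sqrt p$ from the grid $\eta\mathbb Z^p$ that meet $K$. There are at most $C_K\eta^{-p}$ of them, with $C_K$ depending only on the volume of a fixed neighbourhood of $K$. Each cube has diameter $\sqrt p\,\eta$ and lies in a ball of radius $r$ about a point of $K$. Its image under $\tilde F$ therefore has diameter at most $C\sqrt p\,\eta$, so it lies in a cube of volume $(2C\sqrt p\,\eta)^q$. Summing gives
\[
|F(K)|\le C_K\eta^{-p}\,(2C\sqrt p)^q\eta^{q}=C'\eta^{q-p}.
\]
Since $q>p$, this tends to $0$ as $\eta\to0$, so $F(K)$ is null.

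\emph{Alternative view.} One can also extend $\tilde F$ to $W\times\mathbb R^{q-p}$ by $G(y,z)=\tilde F(y)$, a smooth map between open sets of equal dimension. Its image of $U\times\{0\}$, a null set, equals $F(U)$, and smooth maps between equidimensional spaces send null sets to null sets. The only delicate point in either version is the uniform Lipschitz bound, which is why compactness of each $K_k$ and the extension to an open neighbourhood are needed. Beyond this, the argument is routine.
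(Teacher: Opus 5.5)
Your proof is correct. The paper gives no proof of its own here; it cites \cite[Corollary 6.11]{Lee}, so any complete argument is a different route from the paper's.

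Your main argument is the direct one. You reduce to compact pieces, obtain a uniform Lipschitz constant from a bound on $\|D\tilde F\|$ over the compact neighbourhood $K_r\subset W$, and compare roughly $\eta^{-p}$ grid cubes against image cubes of volume $O(\eta^{q})$. The estimates check out: since $\sqrt p\,\eta<r$, every cube meeting $K$ lies in $K_r$, which bounds the count by $\mathrm{vol}(K_r)\,\eta^{-p}$. Each such cube also lies in a ball of radius $r$ about a point of $K$, where the Lipschitz bound applies.

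Your \emph{alternative view}, via $G(y,z)=\tilde F(y)$ on $W\times\mathbb R^{q-p}$, is essentially the usual textbook derivation of the cited corollary. It reduces to the fact that equidimensional smooth maps preserve null sets. That fact is itself proved by the same local Lipschitz covering, so your two versions are really one argument packaged differently.

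What your direct version buys is self-containedness and a weaker hypothesis. It uses only a local Lipschitz bound, so $C^1$ or even locally Lipschitz maps suffice, and it needs no part of Sard's theorem on critical values.

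One small observation: the $\sigma$-compactness of $U$ is not actually needed. Running your covering argument on a compact exhaustion of the open set $W$ shows that $\tilde F(W)$ itself is null, and $F(U)\subseteq\tilde F(W)$. The product version makes this equally clear, since $U\times\{0\}$ is null for every $U\subseteq W$.
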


\subsection{}
Assume now that
$
rN>d.
$
We shall choose constants
$$
c_{\ell,k}\in\mathbb C,
\qquad
1\leq \ell\leq r,\quad 1\leq k\leq K,
$$
and define
\begin{equation}\label{eq:Fl-linear-combination}
    F_\ell(x)
=
\sum_{k=1}^K c_{\ell,k}\varphi_k(x).
\end{equation}
Note that each $F_\ell$ is smooth and quasiperiodic.

\begin{lemma}
There exists a set $\Theta \subset (\mathbb C^K)^r$ of full Lebesgue measure such that if $(c_{\ell,k})_{1\leq \ell\leq r, 1\leq k\leq K} \in \Theta $ then
\begin{equation}\label{eq:no-common-zero-orbit}
    \sum_{\ell=1}^r\sum_{j=1}^N
|F_\ell(x-\gamma_j)|^2
>
0
\end{equation}
for every $x\in\mathbb R^{2d}$.
\end{lemma}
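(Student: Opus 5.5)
The plan is to show that the set of bad coefficient vectors
$$
\mathcal B:=\Bigl\{c=(c_{\ell,k})\in(\mathbb C^K)^r:\ \exists\,x\in\mathbb R^{2d}\ \text{with}\ A(x)c_\ell=0\ \text{for all }\ell\Bigr\}
$$
is contained in a finite union of images of smooth maps from a space of real dimension strictly smaller than $2rK$. Lemma \ref{lma:sard} then gives $|\mathcal B|=0$, and we may take $\Theta:=(\mathbb C^K)^r\setminus\mathcal B$. Here $c_\ell=(c_{\ell,k})_k$, and $A(x)c_\ell=0$ is precisely the statement that $F_\ell(x-\gamma_j)=0$ for all $j$.

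First reduce to a compact parameter set. The expression in \eqref{eq:no-common-zero-orbit} is $\Lambda$-periodic up to the row permutation and unimodular scalars exhibited in the proof of the preceding lemma, so its vanishing at $x$ is equivalent to its vanishing at the representative $y\in P_\Lambda$. Hence it suffices to consider $x\in P_\Lambda$.

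Next localize the surjectivity. For every $x$ some $N\times N$ minor $\det A_I(x)$ is nonzero. Consequently the open sets $U_I:=\{x:\det A_I(x)\neq0\}$, indexed by the $N$-element subsets $I$, cover the compact set $P_\Lambda$. Each $U_I\cap P_\Lambda$ is $\sigma$-compact, since it is an open subset of $\mathbb R^{2d}$ intersected with a compact set.

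On $U_I$ the condition $A(x)c_\ell=0$ can be solved explicitly using \eqref{eq:star_star}:
$$
c_{\ell,I}=-A_I(x)^{-1}A_{I^c}(x)\,c_{\ell,I^c}.
$$
This gives a smooth map
$$
\Phi_I:U_I\times(\mathbb C^{K-N})^r\to(\mathbb C^K)^r,\qquad (x,(c_{\ell,I^c})_\ell)\mapsto\bigl(-A_I(x)^{-1}A_{I^c}(x)c_{\ell,I^c},\,c_{\ell,I^c}\bigr)_{\ell},
$$
and it is smooth because the entries of $A_I(x)^{-1}$ are rational in the smooth entries $\varphi_k(x-\gamma_j)$ with nonvanishing denominator. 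By construction,
$$
\mathcal B\subseteq\bigcup_I\Phi_I\Bigl((U_I\cap P_\Lambda)\times(\mathbb C^{K-N})^r\Bigr).
$$
Each domain is $\sigma$-compact and sits inside the open set $U_I\times(\mathbb C^{K-N})^r$, on which $\Phi_I$ is defined.

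It remains to count dimensions. The source has real dimension $2d+2r(K-N)$ and the target has real dimension $2rK$, so the source is strictly smaller exactly when $d<rN$, which is our hypothesis. By Lemma \ref{lma:sard}, each image has measure zero, and a finite union of null sets is null.

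I expect the main obstacle to be the bookkeeping in the reduction to $P_\Lambda$ together with the choice of $\sigma$-compact domains, since Sard's lemma needs the map to be defined on an open neighborhood of its domain. I handle this by using $U_I$ itself as that open neighborhood; the dimension inequality is then immediate.
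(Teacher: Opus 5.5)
Your proposal is correct and is essentially the paper's proof: cover by the open sets where an $N\times N$ minor $A_I(x)$ is invertible, solve $c_{\ell,I}=-A_I(x)^{-1}A_{I^c}(x)c_{\ell,I^c}$, and apply Lemma~\ref{lma:sard} to $\Phi_I$ using $2d+2r(K-N)<2rK$. Your preliminary reduction to $P_\Lambda$ is valid but not needed. The set $U_I$ (the paper's $\Omega_I$) is already $\sigma$-compact as an open subset of $\mathbb R^{2d}$, so the paper applies the lemma directly on the open set $\Omega_I\times(\mathbb C^{K-N})^r$, and the obstacle you anticipated does not actually arise.
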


\begin{proof}
Let
$$
C=(c_1,\dots,c_r)\in(\mathbb C^K)^r,
\qquad
c_\ell=(c_{\ell,1},\dots,c_{\ell,K}).
$$
For fixed $x$, the bad condition
$$
F_\ell(x-\gamma_j)=0
\qquad
\text{for all } \ell,j
$$
is equivalent to
$$
A(x)c_\ell=0
\qquad
\text{for every } \ell=1,\dots,r.
$$
Since $A(x)$ is surjective, $\ker A(x)$ has complex dimension $K-N$.

We show that the union of all bad coefficient vectors has measure zero in
$(\mathbb C^K)^r$, identified with $\mathbb R^{2rK}$. For each
$N$-element subset $I\subseteq\{1,\dots,K\}$, let $\Omega_I$ be the open
set of all $x\in\mathbb R^{2d}$ for which the $N\times N$ minor of
$A(x)$ using the columns in $I$ is invertible. The sets $\Omega_I$ cover
$\mathbb R^{2d}$.
Indeed, $A(x)$ has rank $N$ for every $x$, so some $N\times N$
minor of $A(x)$ is invertible.

Fix $I$ and use equation \eqref{eq:star_star} to obtain
$
A_I(x)c_{\ell,I}+A_{I^c}(x)c_{\ell,I^c}=0,
$
or equivalently
\begin{equation}\label{eq:solve-bad-coefficients}
    c_{\ell,I}
=
-A_I(x)^{-1}A_{I^c}(x)c_{\ell,I^c}.
\end{equation}
It follows that all bad coefficients with witness $x\in\Omega_I$ are contained in the
image of the smooth map
$$
\Phi_I:
\Omega_I\times(\mathbb C^{K-N})^r
\to
(\mathbb C^K)^r
$$
defined by taking the free variables $c_{\ell,I^c}$ and then defining
$c_{\ell,I}$ by \eqref{eq:solve-bad-coefficients}. The domain is
$\sigma$-compact and has real dimension
$
2d+2r(K-N).
$
The ambient coefficient space has real dimension
$
2rK.
$
Since $rN>d$, we have
$
2d+2r(K-N)<2rK.
$
By Lemma \ref{lma:sard}, $\Phi_I\bigl(\Omega_I\times(\mathbb C^{K-N})^r\bigr)$
has measure zero. Since there are only finitely many choices of $I$,  the full
bad set has measure zero.
Choosing $C\in(\mathbb C^K)^r$ outside of this bad set guarantees that
\eqref{eq:no-common-zero-orbit} holds for every $x\in\mathbb R^{2d}$.
\end{proof}

Now assume that the $c_{\ell,k}$ belong to the set $\Theta$ given in the previous lemma and define
\begin{equation}\label{eq:M-definition}
    M(x)
:=
\sum_{\ell=1}^r\sum_{j=1}^N
|F_\ell(x-\gamma_j)|^2.
\end{equation}
By construction, we have that
$
M(x)>0
$
for every $x\in\mathbb R^{2d}$. We show that $M$ is $\Lambda$-periodic. To do so, let $\lambda\in\Lambda$ and observe that for
each $j$, there is a permutation $\sigma$ of $\{1,\dots,N\}$ and an
element $h_j\in H$ such that
$$
\lambda-\gamma_j=-\gamma_{\sigma(j)}+h_j.
$$
Since each $F_\ell$ is quasiperiodic, $|F_\ell|^2$ is $H$-periodic.
Therefore,
$$
M(x+\lambda) = \sum_{\ell=1}^r\sum_{j=1}^N
|F_\ell(x+\lambda-\gamma_j)|^2 = \sum_{\ell=1}^r\sum_{j=1}^N
|F_\ell(x-\gamma_{\sigma(j)}+h_j)|^2 = M(x).
$$
In particular, $M$ is $H$-periodic. Since $M$ is smooth, strictly
positive, and $H$-periodic, it has a positive minimum on a fundamental cube
for $H$. Hence, $1/\sqrt{M}$ is smooth.

Finally, set
\begin{equation}\label{eq:Gl-normalized}
    G_\ell(x)
:=
\frac{F_\ell(x)}{\sqrt{M(x)}}.
\end{equation}
Because $M$ is $H$-periodic, each $G_\ell$ is smooth and
quasiperiodic. Also, since $M$ is $\Lambda$-periodic and
$\gamma_j\in\Lambda$, we have
\begin{equation}\label{eq:normalized-zak-identity}
\sum_{\ell=1}^r\sum_{j=1}^N
|G_\ell(x-\gamma_j)|^2  
=
\frac{1}{M(x)}
\sum_{\ell=1}^r\sum_{j=1}^N
|F_\ell(x-\gamma_j)|^2  
=
1.
\end{equation}

\begin{proof}[Proof of the reverse direction of Theorem \ref{thm:main}]
Let $G_\ell$ be defined as in \eqref{eq:Gl-normalized}.
    Invoking Lemma \ref{lma:Zak_bijection}, we may choose
$
g_1,\dots,g_r\in\mathcal S(\mathbb R^d)
$
such that
$
Zg_\ell=G_\ell.
$
Set $h_{\ell,j}:=\pi(\gamma_j)g_\ell$. 
Using the relation \eqref{eq:Zak_TS_property}, we have
$$
Z(\pi(a_j,b_j)g_\ell)(u,v)
=
e^{2\pi i\,b_j\cdot u}
Zg_\ell(u-a_j,v-b_j).
$$
Hence,
$$
|Zh_{\ell,j}(u,v)|^2
=
|G_\ell(u-a_j,v-b_j)|^2.
$$
Combining \eqref{eq:normalized-zak-identity}
with Lemma \ref{lma:Zak_frame} implies that
$
\bigcup_{\ell=1}^r\bigcup_{j=1}^N
\mathbf G(h_{\ell,j},H)
$
is a Parseval frame for $L^2(\mathbb R^d)$. This system is, up to unimodular scalars, equal to $\bigcup_{\ell=1}^r
\mathbf G(g_\ell,\Lambda)$.
\end{proof}

Finally, we note that Theorem \ref{thm:conj} follows from Theorem \ref{thm:main}: for $d > 1$ consider (for example) the lattice $\Lambda = \Lambda_d = \mathbb{Z}^d\times\bigl(\mathbb{Z}^{d-1}\times \frac12 \mathbb{Z}\bigr)$ which has density $D(\Lambda)=2>1$. Then for $L=\Z^d$, we have that $L \times L^* = \Z^d \times \Z^d$ is a sublattice of $\Lambda$ with index $N=2 \leq d$. Applying Theorem \ref{thm:main} with $r=1$ we deduce that there exists no $g \in C_Z(\Rd)$ such that $\mathbf{G}(g,\Lambda)$ is a frame for $\ltd$.

\section{Non-existence in $\mathcal{S}(\Rd)$ and $C_Z(\Rd)$}

If $\Lambda$ is a lattice as in Theorem \ref{thm:main} and $N \leq d$, then, in the single-window case, there does not exist any $g \in C_Z(\Rd)$ such that $\mathbf{G}(g,\Lambda)$ is a frame. It turns out that, in many cases, once an obstruction to the frame property is established for the Schwartz space, it can be lifted to the space $C_Z(\Rd)$. A simple and direct proof of a result of this type can be found below, which constitutes the final statement of the paper.

\begin{proposition}\label{prop:equivalence}
    Let $d \in \N$ and let $L=A\Z^d$ be a lattice with $A \in \mathrm{GL}_d(\Q)$. Further, let $\Lambda \subseteq \R^{2d}$ be a lattice containing $L \times L^*$ as a sublattice of finite index. Then the following are equivalent.
    \begin{enumerate}
        \item There exists $g \in C_Z(\Rd)$ such that $\mathbf{G}(g,\Lambda)$ is a frame.
        \item There exists $g \in \mathcal{S}(\Rd)$ such that $\mathbf{G}(g,\Lambda)$ is a frame.
    \end{enumerate}
\end{proposition}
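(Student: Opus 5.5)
The direction (2)$\Rightarrow$(1) is immediate from the inclusion $\mathcal S(\Rd)\subseteq C_Z(\Rd)$ in \eqref{eq:space_inclusions}, so the work lies in (1)$\Rightarrow$(2). First, apply the reduction of Section \ref{sec:reduction}. Conjugation by the dilation $D_A$ maps $\Lambda$ to a lattice $\Lambda'\supseteq H=\Z^d\times\Z^d$ of index $N$. It preserves both $\mathcal S(\Rd)$ and, since $A\in\mathrm{GL}_d(\Q)$, the class $C_Z(\Rd)$. It also preserves the frame property. We may therefore assume $L=\Z^d$. Choose representatives $\gamma_j=(a_j,b_j)$ of $\Lambda/H$ and set $G:=Zg$, which is continuous and quasiperiodic. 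Exactly as in the proof of the forward direction of Theorem \ref{thm:main}, Lemma \ref{lma:Zak_frame} together with \eqref{eq:Zak_TS_property} shows that $\mathbf G(g,\Lambda)$ is a frame if and only if
\[
m_G(x):=\sum_{j=1}^N |G(x-\gamma_j)|^2
\]
satisfies $A\le m_G\le B$ almost everywhere. Since $G$ is continuous, the lower bound holds everywhere, so $m_G\ge A>0$ on all of $\R^{2d}$. The upper bound is automatic by compactness.

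The plan is then to approximate $G$ uniformly by a smooth quasiperiodic $\tilde G$ with $\|G-\tilde G\|_\infty<\eta$. Then
\[
\sqrt{m_{\tilde G}}\ \ge\ \sqrt{m_G}-\sqrt N\,\eta\ \ge\ \sqrt A-\sqrt N\eta>0
\]
for small $\eta$, by the triangle inequality in $\C^N$. Smoothness of $\tilde G$ gives the upper bound. Lemma \ref{lma:Zak_bijection} then yields $\tilde g\in\mathcal S(\Rd)$ with $Z\tilde g=\tilde G$, and Lemma \ref{lma:Zak_frame} shows that $\mathbf G(\tilde g,\Lambda)$ is a frame.

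The main step is constructing the approximation while preserving quasiperiodicity. I would mollify in the $v$-variable against the twisted structure, or more simply use a partition of unity combined with the quasiperiodization operator $\mathcal P$ from \eqref{eq:quasi-periodization}. Choose $\theta\in C_c^\infty(\R^{2d})$ with $\sum_{h\in H}\theta(x-h)\equiv1$ (a smooth $H$-partition of unity). Let $\kappa_\epsilon$ be a standard mollifier. Define
\[
\tilde G:=\mathcal P\bigl(\kappa_\epsilon*(\theta G)\bigr).
\]
This function is smooth and quasiperiodic. Moreover, the unmollified version satisfies $\mathcal P(\theta G)=G$: quasiperiodicity gives
\[
e^{2\pi i n\cdot v}\theta(x-h)G(x-h)=\theta(x-h)G(x),
\]
so summing over $h$ recovers $G$. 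Since $\theta G$ is continuous with compact support, $\kappa_\epsilon*(\theta G)\to\theta G$ uniformly with supports in a fixed compact set. The sum defining $\mathcal P$ is therefore uniformly locally finite with a bounded number of terms, each of modulus at most the sup-norm error. This gives $\|\tilde G-G\|_\infty\to0$. Checking this locally-finite uniform convergence carefully is the one point needing attention; the remainder is bookkeeping. Note that the same argument works for multiple windows.
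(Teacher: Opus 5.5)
Your proposal is correct and follows essentially the same route as the paper: reduce to $L=\Z^d$, rewrite the frame property as the lower bound $\sum_{j}|Zg(x-\gamma_j)|^2\ge A$, approximate $Zg$ uniformly by a smooth quasiperiodic function with error below roughly $\sqrt{A/N}$, and pull back to $\mathcal S(\Rd)$ via Lemma \ref{lma:Zak_bijection}. The one difference is that you actually prove the uniform density of smooth quasiperiodic functions, via $\tilde G=\mathcal P(\kappa_\epsilon*(\theta G))$ and the identity $\mathcal P(\theta G)=G$, whereas the paper only asserts this step.
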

\begin{proof}
    It suffices to prove that the first statement implies the second one. Moreover, using an analogous reduction as in Section \ref{sec:reduction}, we can assume that $L=\Z^d$ and that $H = \Z^{2d}$ is a sublattice of $\Lambda$ of index $N \in \N$.

    Now suppose that $g \in C_Z(\Rd)$ is a function such that $\mathbf{G}(g,\Lambda)$ is a frame. Choose representatives $z_1, \dots, z_N \in \Lambda$ for $\Lambda / H$. Then $\Lambda$ is given by the disjoint union
    $
    \Lambda = \bigcup_{j=1}^N (z_j + H).
    $
    Moreover, $\mathbf{G}(g,\Lambda)$ is a frame if and only if $\bigcup_{j=1}^N \mathbf{G}(\pi(z_j) g,H)$ is a frame. According to Lemma \ref{lma:Zak_frame} this holds if and only if there exist $A,B>0$ such that
    $$
    A \leq  \sum_{j=1}^N | Z(\pi(z_j)g) |^2 \leq B.
    $$
    Choose $\varepsilon>0$ so small such that $\sqrt{N} \varepsilon < \frac12 \sqrt{A}$ and let $F \in C^\infty(\R^{2d})$ be a smooth Zak-quasiperiodic function such that
    $$
    \| F- Zg \|_{L^\infty(\R^{2d})} < \varepsilon.
    $$
    The latter is possible by the density of smooth Zak-quasiperiodic functions in the space $C_Z(\Rd)$. According to Lemma \ref{lma:Zak_bijection} there exists $f \in \mathcal{S}(\Rd)$ such that $F = Zf$. Since time-frequency shifts act on Zak-transforms by translations and unimodular phases, we have
    $$
    \| Z(\pi(z_j)f) - Z(\pi(z_j)g) \|_{L^\infty(\R^{2d})} < \varepsilon
    $$
    for every $j = 1, \dots, N$. Hence, pointwise we obtain
    $$
    \left (  \sum_{j=1}^N |Z(\pi(z_j) f)(x,\omega)|^2 \right )^{\frac12} \geq \frac12 \sqrt{A}, \quad (x,\omega) \in [0,1]^{2d}.
    $$
    The upper bound is automatic because $Zf$ is smooth on the compact set $[0,1]^{2d}$. We therefore obtain that $\bigcup_{j=1}^N \mathbf{G}(\pi(z_j) f,H)$ is a frame which is equivalent to $\mathbf{G}( f,\Lambda)$ being a frame.
\end{proof}

\section{Appendix: Lean formalization}

The main statement of this work (Theorem \ref{thm:main}) has been verified in the Lean language \cite{lean4}, using the Mathlib library. The result was auto-formalized using GPT 5.5 in Codex with custom prompt skills and Oliver Dressler's MCP \cite{DresslerMCP}. The formalization is available at \url{https://github.com/jaumededios/Schwartz_Gabor_Frames}. The files  \lean{Showcase_CZ.lean} and \lean{Showcase_Schwartz.lean}, produced and extensively reviewed by us, provide a self-contained version of the main statement that was Lean verified, and contain all necessary definitions that were not already in Mathlib.  The reader may also consult  \cite{armstrong2026formalization,bertolini20262, hariharan2026milestone,ilin2026semi} for other recent examples of formalizations involving AI. 

This verification relies on many pre-existing and important mathematical concepts, which exist in Lean thanks to the enormous efforts by the Mathlib community \cite{mathlib2020} in carefully formalizing mathematical objects and ensuring that their meaning is what the mathematical community expects.  These include the definitions of quotient groups, the Fourier transform, and even completely elementary concepts such as the real numbers themselves. The construction of these objects is not detailed in this section, but they have been thoughtfully designed and very carefully reviewed by the Mathlib community.

The goal of this section is to communicate the Lean translation to people who may not be experts in Lean. For the Lean expert, this includes defining \emph{abbreviations}  such as \lean{ℝ^[d]} for $\mathbb R^d$ instead of the more common \lean{(Fin d -> ℝ)}. The result that was formalized in Lean may be stated as follows. 
\begin{theorem*}[Compare with Theorem \ref{thm:main}]
    Let $1\le d,r$ be natural numbers with $d>1$. Let $L$ be a rational lattice in $\mathbb R^d$ with dual lattice $L^*$. Let $\Lambda\supset L\times L^{*}$ be a lattice in $\mathbb R^d \times \mathbb R^d$ of index $N = [\Lambda : L\times L^*]$. Given a family of $r$ Schwartz functions $\mathcal F :=(f_1, \dots, f_r)$, define 
    $$
    \mathbf G(\mathcal F, \Lambda):= 
    \{
    e^{2\pi i w_0 x} f_j(x-x_0), \ \  (x_0, w_0) \in \Lambda,
    \ \ j=1,\dots, r
    \}.
    $$
    Then the following statements are equivalent:
    \begin{enumerate}
        \item $d<r\cdot N$.
        \item There are $r$ Schwartz functions $\mathcal F:=(f_1, \dots, f_r)$ such that $\mathbf G(\mathcal F, \Lambda)$ is a Parseval frame.
        \item There are $r$ Schwartz functions $\mathcal F:=(f_1, \dots, f_r)$ such that $\mathbf G(\mathcal F, \Lambda)$ is a frame.
        \item There are $r$ $C_Z$ functions $\mathcal F:=(f_1, \dots, f_r)$ such that $ \mathbf G(\mathcal F, \Lambda)$ is a frame.
    \end{enumerate}
\end{theorem*}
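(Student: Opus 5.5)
The statement here is the Lean-formalized version of Theorem \ref{thm:main}, with four conditions. I will prove it by a cycle of implications: (1)$\Rightarrow$(2)$\Rightarrow$(3)$\Rightarrow$(4)$\Rightarrow$(1). Almost every step is already available in the paper, so the work is mainly in checking that the Lean-style definition of $\mathbf G(\mathcal F,\Lambda)$ matches the multi-window system $\bigcup_{\ell=1}^r\mathbf G(f_\ell,\Lambda)$ used earlier. It does: with $g_\lambda(t)=e^{2\pi i\omega\cdot t}g(t-x)$ the two families coincide element by element, since $\pi(x,\omega)=M_\omega T_x$.

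For (1)$\Rightarrow$(2), I first apply the reduction of Section \ref{sec:reduction}. The dilation $D_A$ with $A\in\mathrm{GL}_d(\Q)$ is unitary and maps $L\times L^*$ onto $H=\Z^{2d}$ while preserving the index $N$. It also preserves $\mathcal S(\Rd)$, and unitary equivalence preserves the Parseval property. After this reduction I use the construction of Section \ref{sec:construction}. Assuming $rN>d$, the bump functions give a pointwise surjective evaluation map $A(x)$, and the Sard argument (Lemma \ref{lma:sard}) gives coefficients for which $M>0$ everywhere. Normalizing then produces smooth quasiperiodic $G_\ell$ satisfying \eqref{eq:normalized-zak-identity}. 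Lemma \ref{lma:Zak_bijection} supplies Schwartz $g_\ell$ with $Zg_\ell=G_\ell$, and Lemma \ref{lma:Zak_frame} with $A=B=1$ shows that the system is Parseval.

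The implication (2)$\Rightarrow$(3) is trivial, because a Parseval frame is a frame. For (3)$\Rightarrow$(4) I use the inclusion $\mathcal S(\Rd)\subseteq C_Z(\Rd)$, which holds because the Zak transform of a Schwartz function is given by a locally uniformly convergent series of continuous functions and is therefore continuous.

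For (4)$\Rightarrow$(1), I again reduce to $L=\Z^d$, noting that $D_A$ preserves $C_Z(\Rd)$ by the rationality of $A$. I then argue by contradiction, as in the forward direction of Theorem \ref{thm:main}. Decompose $\Lambda=\bigcup_{j=1}^N(\gamma_j+H)$. Then $\mathbf G(\mathcal F,\Lambda)$ is a frame exactly when the system with the $rN$ windows $\pi(\gamma_j)g_\ell$ along $H$ is a frame. The Zak transforms of these windows are continuous and quasiperiodic by \eqref{eq:Zak_TS_property}. If $rN\le d$, Theorem \ref{thm:zak_zero} gives a common zero of all of them. By continuity, the sum of their squared moduli is then small on a set of positive measure, which contradicts the lower bound required by Lemma \ref{lma:Zak_frame}.

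The main obstacle is not mathematical, since each step is inherited from results already proved. The delicate part is bookkeeping in the reduction: making sure that \emph{rational lattice} and \emph{index} are interpreted the same way on both sides of the dilation. In particular I need $[\Lambda':\Z^{2d}]=N$ and that $D_A$ maps $C_Z(\Rd)$ into itself. For the second fact I will argue that, for a rational $A$, the function $Z(D_Af)$ can be written as a finite combination of translated and phase-modulated copies of $Zf$ over a common sublattice of periods, so its continuity follows from that of $Zf$.
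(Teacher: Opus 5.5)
Your proposal is correct and follows the paper's route: after the dilation reduction of Section~\ref{sec:reduction}, the cycle (1)$\Rightarrow$(2)$\Rightarrow$(3)$\Rightarrow$(4)$\Rightarrow$(1) combines the Sard-based Parseval construction, the trivial implication, $\mathcal S(\Rd)\subseteq C_Z(\Rd)$, and the common-zero Theorem~\ref{thm:zak_zero} together with Lemma~\ref{lma:Zak_frame}, which is exactly how the paper assembles Theorem~\ref{thm:main}. Your sketch that $Z(D_Af)$ is a finite combination of shifted, phase-modulated copies of $Zf$ (via a common sublattice such as $A\Z^d\cap\Z^d$) is a reasonable justification of the $C_Z$-invariance of $D_A$, which the paper only asserts.
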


For the sake of presentation, we will discuss the material and preliminaries from the simpler file \lean{showcase_schwartz.lean}, which establishes the equivalence between the first three statements. In particular, we will avoid discussing the definition of the $C_Z$ class in Lean, as it is somewhat hard to digest. The formalized definition of the space $C_Z$ and the equivalence with the fourth statement can be found in the GitHub repository. Our theorem in Lean takes the following data.
\begin{enumerate}
    \item Two natural numbers $d,r\geq 1$ with the hypothesis that $d>1$.
    \item A lattice $L \subset \mathbb R^d$ and a lattice $\Lambda \subset \mathbb R^d\times \mathbb R^d $, with the hypothesis that $\Lambda \geq L\times L^*$.  In Lean, a lattice in $\mathbb R^d$ is defined as a $\mathbb Z$-\textit{submodule} of $\mathbb R^d$ which is topologically discrete and a $\mathbb Z$-lattice (the $\mathbb{R}$-span is the whole space).
    \item The hypothesis that $L$ is a rational lattice (all points in $\Lambda$ have rational components). 
\end{enumerate}
The theorem then states that the following three conclusions are equivalent:
\begin{enumerate}
    \item $d<r \cdot  [\Lambda : L\times L^*]$.
    \item There exist $r$ Schwartz functions such that the associated multi-window Gabor system along $\Lambda$ is a frame.
    \item There exist $r$ Schwartz functions such that the associated multi-window Gabor system along $\Lambda$ is a Parseval frame.
\end{enumerate}

Once the right definitions are in place, the translation in Lean is literal.

\begin{leancode}
theorem schwartz_gabor_frame_tfae {d r : ℕ} (hd : 1 < d)
    (L : Submodule ℤ (ℝ^[d])) [DiscreteTopology L] [IsZLattice ℝ L]
    (Λ : Submodule ℤ (ℝ^[d] × ℝ^[d])) [DiscreteTopology Λ] [IsZLattice ℝ Λ]
    (hL_rational : isRational L) (hΛ_larger: Λ ≥ L.prod (Lᵛ)):
    TFAE
      [d < r*(LatticeIndex hΛ_larger),
       ∃ windows : (SchwartzMap (ℝ^[d]) ℂ)^[r], 
        IsFrame (GaborFamily windows (Λ)),
       ∃ windows : (SchwartzMap (ℝ^[d]) ℂ)^[r],
          IsParsevalFrame 
          (GaborFamily windows (Λ))]
    := by  -- the proof starts here and spans multiple files
\end{leancode}

\subsection{Preliminary definitions}

To state \lean{schwartz_gabor_frame_tfae} we have to provide several elementary definitions that are not in Mathlib. We explain the main ones below. 

The simplest one is \lean{isRational}, which states that every element of $L$ is in $\mathbb Q^d$. The definition in Lean is a bit more verbose, as \lean{isRational} takes a $\mathbb Z$-submodule \lean{L} of $\mathbb{R}^d$ and returns the proposition ``for every element $x$ of \lean{L} there is a rational element $q$ which, when \textit{coerced} into $\mathbb{R}^d$,\footnote{in Lean, rational numbers (fractions) are not real numbers (equivalence classes of Cauchy sequences) and one must coerce them into reals. Lean is able to perform these coercions automatically most of the time, but the ``:" notation such as \lean{(q : ℝ^[d])} forces it. } is equal to $x$. In Lean, we write

\begin{leancode}
def isRational {d : ℕ} (L: Submodule ℤ (ℝ^[d])) : Prop := 
  ∀ x : L, ∃ q : ℚ^[d], x = (q : ℝ^[d])
\end{leancode}

The next necessary definition is the index of a lattice with respect to another lattice. Lattices are defined as $\mathbb Z$-modules in Mathlib, and Mathlib has a definition of index (\lean{AddSubgroup.relIndex}) for additive subgroups. In order to use it, we have to turn the lattices into additive subgroups.

\begin{leancode}
def LatticeIndex {d : ℕ}
    {Λ₀ Λ : Submodule ℤ (ℝ^[d] × ℝ^[d])} (_hBase : Λ₀ ≤ Λ) : ℕ :=
  AddSubgroup.relIndex (Λ₀.toAddSubgroup) (Λ.toAddSubgroup)
\end{leancode}

The next step is to define the Gabor family of translations and modulations. We begin by defining the phase function.
\begin{leancode}
def phase {d : ℕ} (ω : ℝ^[d]) : ℝ^[d] → ℂ :=
  fun t => Complex.exp (2 * Real.pi * Complex.I * (ω ⬝ᵥ t))
\end{leancode}

We wish to use the phase function to define the time-frequency shift $\tau_{x_0,w} f(x) := e^{2\pi i w\cdot x }f(x -x_0)$. This is slightly delicate in Lean because Schwartz functions are not just maps $\mathbb R^d \to \mathbb C$, but objects that carry much more structure (decay, differentiability, etc.). A slightly cumbersome consequence of this is that one must use \lean{SchwartzMap.smulLeftCLM}, which defines the product of a Schwartz function and a function of tame growth as a Schwartz function. Similarly, one must use \lean{SchwartzMap.compSubConstCLM}, which defines the translate of a Schwartz function as a Schwartz function. With this in mind, \lean{schwartzTimeFrequencyShift} is a function that takes $(x,\omega) \in \mathbb R^d \times \mathbb R^d$ and returns $\tau_{x,\omega}$, a map from $\mathcal{S}(\Rd)$ to itself.

\begin{leancode}
def schwartzTimeFrequencyShift {d : ℕ}:
    ℝ^[d]× ℝ^[d] →  (SchwartzMap (ℝ^[d]) ℂ → SchwartzMap (ℝ^[d]) ℂ) :=
    fun ⟨x, ω⟩ ↦ SchwartzMap.smulLeftCLM ℂ (phase ω) ∘ 
                 SchwartzMap.compSubConstCLM ℂ x
\end{leancode}

 Given a lattice in $\mathbb R^d \times \mathbb R^d$ and a family of $r$ windows, the above definition can be repackaged to define the Gabor family. Note that families (indexed sets) are stored in Lean as the maps from the index set to the family. The \lean{.toLp 2} end in the expression below means that the Schwartz function should be thought of as an $L^2$ function.
\begin{leancode}
def GaborFamily {d r : ℕ} 
    (windows : (SchwartzMap (ℝ^[d]) ℂ)^[r])
    (Λ : Submodule ℤ (ℝ^[d] × ℝ^[d]))  :
    (Fin r) × Λ  →  L²(ℝ^[d]) :=
  fun ⟨j, τ⟩ => (schwartzTimeFrequencyShift τ (windows j)).toLp 2
\end{leancode}

The last remaining definition that was not already in Mathlib is that of frames. We define frames and Parseval frames (frames with constant 1) below. They can be defined for general commutative groups \lean{H} which are inner product spaces and for an arbitrary index set\lean{ ι}. We then apply them in the particular case of $L^2$ for a Gabor family.

\begin{leancode}
def IsFrame {ι H : Type*} [NormedAddCommGroup H]  [InnerProductSpace ℂ H] 
    (atoms : ι → H) : Prop :=
  ∃ A > 0, ∃ B > 0, ∀ f : H,
      A * ‖f‖ ^ 2 ≤ ∑' i, ‖⟪f, atoms i⟫_ℂ‖ ^ 2 ∧
      ∑' i, ‖⟪f, atoms i⟫_ℂ‖ ^ 2 ≤ B * ‖f‖ ^ 2

def IsParsevalFrame {ι H : Type*} [NormedAddCommGroup H] [InnerProductSpace ℂ H]
    (atoms : ι → H) : Prop :=
  ∀ f : H, (∑' i, ‖⟪f, atoms i⟫_ℂ‖ ^ 2) = ‖f‖ ^ 2
\end{leancode}

\subsection{Convenience notation}
Certain notation, such as \lean{ℝ^[d]} to denote $\mathbb R^d$ or\lean{ Lᵛ} to denote the dual lattice, are not standard in Lean (which uses much more verbose notation). The first lines of \lean{showcase_schwartz.lean} record our notation and provide a brief explanation of our choices. Our intention when introducing this notation was to make the statements easier for non-experts in Lean to parse.

\section*{Acknowledgments}

L.L.~is grateful to the Azrieli Foundation for the award of an Azrieli Fellowship and acknowledges the support of this research by ISF Grant No.~854/25.

\bibliographystyle{plain}
\bibliography{bibfile}

\end{document}